\newcommand{\arXiv}[1]{\href{http://arxiv.org/abs/#1}{\tt arXiv:\nolinkurl{#1}}}
\newcommand{\arxiv}[1]{\href{http://arxiv.org/abs/#1}{\tt arXiv:\nolinkurl{#1}}}
\newcommand{\googlebooks}[1]{(preview at \href{http://books.google.com/books?id=#1}{google books})}
\definecolor{dark-red}{rgb}{0.7,0.25,0.25}
\definecolor{dark-blue}{rgb}{0.15,0.15,0.55}
\definecolor{medium-blue}{rgb}{0,0,.8}
\definecolor{DarkGreen}{RGB}{0,150,0}
\theoremstyle{plain}
\newtheorem{thm}{Theorem}[section]
\newtheorem*{thm*}{Theorem}
\newtheorem{thmalpha}{Theorem}
\newtheorem{cor}[thm]{Corollary}
\newtheorem{coralpha}[thmalpha]{Corollary}
\newtheorem*{cor*}{Corollary}
\newtheorem{lem}[thm]{Lemma}
\newtheorem{prop}[thm]{Proposition}
\newtheorem*{quest*}{Question}
\theoremstyle{definition}
\newtheorem{defn}[thm]{Definition}
\newtheorem{assumption}[thm]{Assumption}
\newtheorem{assumptions}[thm]{Assumptions}
\newtheorem{nota}[thm]{Notation}
\newtheorem{exs}[thm]{Examples}
\newtheorem{ex}[thm]{Example}
\newtheorem{rem}[thm]{Remark}
\newtheorem{rems}[thm]{Remarks}
\newtheorem{fact}[thm]{Fact}
\DeclareMathOperator{\conv}{conv}
\DeclareMathOperator{\dist}{dist}
\DeclareMathOperator{\Gr}{Gr}
\DeclareMathOperator{\OP}{op}
\DeclareMathOperator{\spann}{span}
\DeclareMathOperator{\Tr}{Tr}
\DeclareMathOperator{\tr}{tr}
\newcommand{\comment}[1]{}
\newcommand{\be}{\begin{enumerate}[(1)]}
\newcommand{\ee}{\end{enumerate}}
\newcommand{\N}{\mathbb{N}}
\newcommand{\e}{\epsilon}
\newcommand{\I}{\infty}
\newcommand{\set}[2]{\left\{#1 \middle| #2\right\}}
\newcommand{\op}{^{\OP}}
\newcommand{\noshow}[1]{}
\renewcommand{\MR}[1]{}
\newcommand{\Asterisk}{\mathop{\scalebox{1.5}{\raisebox{-0.2ex}{$*$}}}}
\def\semicolon{;}
\def\applytolist#1{
    \expandafter\def\csname multi#1\endcsname##1{
        \def\multiack{##1}\ifx\multiack\semicolon
            \def\next{\relax}
        \else
            \csname #1\endcsname{##1}
            \def\next{\csname multi#1\endcsname}
        \fi
        \next}
    \csname multi#1\endcsname}
\def\calc#1{\expandafter\def\csname c#1\endcsname{{\mathcal #1}}}
\def\bbc#1{\expandafter\def\csname bb#1\endcsname{{\mathbb #1}}}
\def\bfc#1{\expandafter\def\csname bf#1\endcsname{{\mathbf #1}}}
\def\sfc#1{\expandafter\def\csname s#1\endcsname{{\sf #1}}}
\def\fc#1{\expandafter\def\csname f#1\endcsname{{\mathfrak #1}}}
\tikzstyle{shaded}=[fill=red!10!blue!20!gray!30!white]
\tikzstyle{unshaded}=[fill=white]
\tikzstyle{empty box}=[circle, draw, thick, fill=white, opaque, inner sep=2mm]
\tikzstyle{annular}=[scale=.7, inner sep=1mm, baseline]
\tikzstyle{rectangular}=[scale=.75, inner sep=1mm, baseline=-.1cm]
\newcommand{\roundNbox}[6]{
	\draw[rounded corners=5pt, very thick, #1] ($#2+(-#3,-#3)+(-#4,0)$) rectangle ($#2+(#3,#3)+(#5,0)$);
	\coordinate (ZZa) at ($#2+(-#4,0)$);
	\coordinate (ZZb) at ($#2+(#5,0)$);
	\node at ($1/2*(ZZa)+1/2*(ZZb)$) {#6};
}
\begin{document}
\title{
Compact quantum metric spaces from free graph algebras
}
\author{Konrad Aguilar, Michael Hartglass, and David Penneys}
\date{\today}
\maketitle
\maketitle
\begin{abstract}
Starting with a vertex-weighted pointed graph $(\Gamma,\mu,v_0)$, we form the free loop algebra $\cS_0$ defined in 
Hartglass-Penneys' article on canonical $\rm C^*$-algebras associated to a planar algebra.
Under mild conditions, $\mathcal{S}_0$ is a non-nuclear simple $\rm C^*$-algebra with unique tracial state.
There is a canonical polynomial subalgebra $A\subset \mathcal{S}_0$ together with a Dirac number operator $N$ such that 
$(A, L^2A,N)$ is a spectral triple.
We prove the Haagerup-type bound of Ozawa-Rieffel to verify $(\mathcal{S}_0, A, N)$ yields a compact quantum metric space in the sense of Rieffel.

We give a weighted analog of Benjamini-Schramm convergence for vertex-weighted pointed  graphs.
As our $\rm C^*$-algebras are non-nuclear, we adjust the Lip-norm coming from $N$ to utilize the finite dimensional filtration of $A$.
We then prove that convergence of vertex-weighted pointed graphs leads to quantum Gromov-Hausdorff convergence of the associated adjusted compact quantum metric spaces.

As an application, 
we apply our construction to the Guionnet-Jones-Shyakhtenko (GJS) $\rm C^*$-algebra associated to a planar algebra.
We conclude that the compact quantum metric spaces coming from the GJS $\rm C^*$-algebras of many infinite families of planar algebras converge in quantum Gromov-Hausdorff distance.

\end{abstract}

\section{Introduction}


In Connes' noncommutative geometry \cite{MR1007407,MR1303779}, the notion of a spectral triple is an analog of a space of smooth functions on a non-commutative manifold.
In \cite{MR1647515,MR1727499}, Rieffel initiated the study of noncommutative \emph{metric} geometry via the notion of a compact quantum metric space.
He then introduced \emph{quantum Gromov-Hausdorff distance} as a noncommutative analogue of Gromov-Hausdorff distance to provide a framework for establishing convergence of certain spaces arising in the operator algebra and high-energy physics literature \cite{MR2055927,MR2055928}.

To the best of our knowledge, all results proving convergence in quantum Gromov-Hausdorff distance do so for sequences of \emph{nuclear} $\rm C^*$-algebras, where finite-dimensional approximations are crucial in demonstrating convergence \cite{MR2055927, MR2520541,MR4015960,MR3718439,MR3778347,MR4266364}.
In this article, we prove a result about quantum Gromov-Hausdorff convergence for compact quantum metric spaces associated to \emph{non-nuclear} free graph algebras produced from vertex-weighted pointed graphs.

Given an unoriented connected graph $\Gamma=(V,E)$ with an arbitrary vertex weighting $\mu: V\to (0,\infty)$, 
we replace each edge $\varepsilon\in E$ between two distinct vertices by two oriented edges $\epsilon, \epsilon\op\in\vec{E}$ in opposite directions, and we replace each loop by a single oriented loop
to obtain a strongly connected directed graph $\vec{\Gamma}=(V, \vec{E})$ which inherits the same weighting $\mu$.
One forms the Toepltiz-Cuntz-Krieger graph algebra $\cT(\vec{\Gamma})$ \cite{MR1722197} with generators $\ell(\epsilon)$ for $\epsilon\in \vec{E}$, and the \emph{free graph algebra} \cite{MR3624399} is given by
$$
\cS(\Gamma,\mu)
=
{\rm C^*}
\left(
C_0(V) 
\cup 
\set{a_\epsilon \ell(\epsilon) + a_\epsilon^{-1}\ell(\epsilon\op)}{\epsilon \in \vec{E}} 
\right)
\subset \cT(\vec{\Gamma}).
$$
Here, each $a_\epsilon \in(0,\infty)$ depends on the weighting of the source and target of $\epsilon$, which is chosen so that $\cS(\Gamma,\mu)$ has a semifinite trace $\Tr$.
By \cite{MR3679687}, $\cS(\Gamma,\mu)$ is simple exactly when 
\begin{equation}
\label{eq:SimplicityCondition}    
\mu(v) < \sum_{\substack{\e \in \vec{E} \\ s(\e) = v}} \mu(t(\e));
\end{equation}
we assume this condition in the sequel.

Now there are canonical projections $p_v\in \cS(\Gamma,\mu)$ for the vertices $v\in V$, and by simplicity \cite{MR3624399,MR3266249,MR3679687}, each compression $p_v \cS(\Gamma,\mu)p_v$ is Morita equivalent to $\cS(\Gamma,\mu)$.
We thus consider \emph{pointed} weighted graphs, which are equipped with a basepoint $v_0\in V$ such that $\mu(v_0)=1$.
We consider the \emph{free loop algebra} $\cS_0=\cS_{0}(\Gamma,\mu):= p_{v_0} \cS(\Gamma,\mu)p_{v_0}$, which can be described as generated by loops on $\Gamma$ based at $v_0$.
Under condition \eqref{eq:SimplicityCondition}, $\cS_0(\Gamma,\mu)$ also has unique trace \cite{MR3679687}.

The loop algebra $\cS_0$ has a dense $*$-subalgebra $A$ of finite linear combinations of loops, which acts by bounded operators on $L^2(A, \tr_0) \cong L^2(\cS_0, \tr_0)$.
Moreover, $A$ is \emph{filtered} by finite dimensional $*$-closed subspaces $A_n$ of linear combinations of loops of length at most $n$, which satisfy $A_m\cdot A_n \subset A_{m+n}$ and $A_0 = \bbC1_A$.
In this situation, by \cite[Lemma 1.1]{MR2164594}, the formula
$$
N = \sum_{n\geq 0} n \operatorname{Proj}_{A_n \ominus A_{n-1}},
$$
defines a \emph{Dirac number operator} which has bounded commutator with elements of $A$.
Thus $(A,L^2A,N)$ is a \emph{spectral triple} in the sense of Connes \cite{MR1303779}.
We prove the Haagerup-type inequality of \cite[Theorem 1.2]{MR2164594}, which gives the following theorem.

\begin{thmalpha}
\label{thm:CompactQuantumMetricSpace}
The Dirac number operator $N$ induces a Lip-norm $L$ on $A$, making $(\cS_0,A,L)$ a compact quantum metric space in the sense of Rieffel \cite{MR2055927}.
\end{thmalpha}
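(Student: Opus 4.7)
The plan is to verify Rieffel's total-boundedness criterion for $(A, L)$: after showing that $L$ is a lower semi-continuous seminorm on $A$ vanishing exactly on $\bbC 1$, it remains to show that the set $\{a \in A : L(a) \leq 1,\ \tr_0(a) = 0\}$ is totally bounded in the operator norm of $\cS_0$. The key analytic input will be the Haagerup-type inequality of \cite[Theorem~1.2]{MR2164594}.

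The seminorm properties are essentially formal: since $\{A_n\}$ is $*$-closed, multiplicative, and has $A_0 = \bbC 1$, \cite[Lemma~1.1]{MR2164594} guarantees that $[N,a]$ is bounded on $L^2 A$ for every $a \in A$ and that $L(a) := \|[N,a]\|_\infty = 0$ forces $a \in \bbC 1$. Lower semi-continuity with respect to $\|\cdot\|_\infty$ follows from $N$ having discrete nonnegative integer spectrum, so $L$ is the supremum of finite-rank truncations.

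The technical heart of the argument is the Haagerup inequality $\|a\|_\infty \leq P(n)\|a\|_2$ for $a \in H_n := A_n \ominus A_{n-1}$ and some polynomial $P$. My approach is to exploit the reduced free-product structure of $\cS(\Gamma,\mu)$ amalgamated over $C_0(V)$ developed in \cite{MR3624399}: a natural orthonormal basis of $L^2(\cS_0, \tr_0)$ is indexed by based loops in $\vec{\Gamma}$, and the loop-length filtration makes $H_n$ the span of loops of length exactly $n$. Adapting Haagerup's reduced-word strategy as axiomatized in \cite[Theorem~1.2]{MR2164594}, for homogeneous pieces $b \in H_m$ and $c \in H_n$ I would decompose the product as $bc = \sum_{k=0}^{\min(m,n)} (bc)_k$ with $(bc)_k \in H_{m+n-2k}$ according to the cancellation level between the tail of a loop in $b$ and the head of a loop in $c$; Cauchy–Schwarz on each piece, combined with orthogonality of the $H_j$ in $L^2(\cS_0,\tr_0)$ and uniform boundedness of the weights $a_\e$, should then yield the desired polynomial bound.

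With Haagerup in hand, total boundedness follows by the standard argument: the condition $L(a) \leq 1$ and $\tr_0(a) = 0$ forces the weighted $\ell^2$ decay $\sum_n n^2 \|a_n\|_2^2 \leq 1$ on the homogeneous components $a_n \in H_n$ of $a$, which combined with the Haagerup bound controls the operator-norm tails $\|a - E_K a\|_\infty$ uniformly in $a$, where $E_K$ is an appropriate spectral cutoff of $N$. Since each $A_K$ is finite-dimensional, the image of the unit ball under $E_K$ is totally bounded, and letting $K \to \infty$ finishes the argument. The main obstacle I anticipate is the Haagerup estimate itself: translating classical free-group reduced-word combinatorics to the weighted loop-algebra setting requires careful bookkeeping of source–target data and the weights $a_\e$ through the convolution estimates, to confirm that the operator-norm-to-$L^2$-norm growth is genuinely polynomial rather than merely subexponential.
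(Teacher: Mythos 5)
Your overall strategy coincides with the paper's: both verify the Ozawa--Rieffel hypothesis of \cite[Theorem 1.2]{MR2164594} in the orthonormal basis of based loops, decomposing according to the cancellation level between the tail of one loop and the head of another and then applying Cauchy--Schwarz. The paper phrases the estimate as $\|P_m x P_n\| \leq \|x\|_2$ for $x \in W_k$ (Lemma \ref{lem:OzawaRieffelConstant}, giving constant $C=1$) rather than as a norm bound $\|a\| \leq P(n)\|a\|_2$ on each homogeneous piece, but these encode the same computation; one technical convenience you should adopt is to work in the Wick-word basis $Y_\sigma$ rather than the monomials $X_\sigma$, since the $Y_\sigma$ are simultaneously an orthonormal basis of $L^2(\cS_0,\tr_0)$ and eigenvectors of $N$, which is what makes the bookkeeping of the splittings clean. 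Also note that the passage from the Haagerup-type bound to total boundedness of $\{a : L(a)\leq 1,\ \tr_0(a)=0\}$ is precisely the content of Ozawa--Rieffel's theorem, so that part of your outline does not need to be re-proved.

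The one place where your sketch would fail as written is the control of the weights. You propose to push the Cauchy--Schwarz estimate through using ``uniform boundedness of the weights $a_\e$,'' but that is not sufficient: the coefficient appearing at cancellation depth $j$ is $a_\sigma^2 = \sqrt{\mu(s(\sigma))/\mu(t(\sigma))}$ for a sub-path $\sigma$ of length $j$, and $a_\sigma$ is the \emph{product} of the edge weights along $\sigma$. A uniform bound $a_\e \leq M$ therefore only yields $a_\sigma \leq M^{j}$, i.e.\ exponential growth in the cancellation depth --- exactly the subexponential-versus-polynomial failure mode you flag at the end of your proposal. What actually closes the estimate is Assumption \ref{assume:MinimalWeight}: every sub-path $\sigma$ that contributes is an initial segment of a loop based at $v_0$, so $s(\sigma)=v_0$ has the minimal weight $\mu(v_0)=1$, whence $a_\sigma^2 = \sqrt{1/\mu(t(\sigma))} \leq 1$ uniformly in the length of $\sigma$. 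With that substitution in place of uniform boundedness of the individual $a_\e$, your argument goes through and recovers the paper's constant $C=1$.
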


Thus given a connected, vertex-weighted pointed graph $(\Gamma,\mu,v_0)$, we get a canonical compact quantum metric space.
Given a sequence of connected vertex-weighted graphs $(\Gamma_n,\mu_n)$, we say it converges \emph{locally uniformly} to a limit $(\Gamma,\nu)$ if essentially on every ball of radius $R$ about $v$, the graphs $\Gamma_n$ eventually coincide with $\Gamma$, and the weights converge pointwise.
This is a weighted analog of Benjamini-Schramm convergence \cite{MR1873300}.
(See Definition \ref{defn:LocalUniformConvergence} for the precise definition.)
With this definition in hand, we can ask whether the associated compact quantum metric spaces $(\cS_0(\Gamma_n,\mu_n),A(\Gamma_n,\mu_n),L_n)$ converge in quantum Gromov-Hausdorff distance to $(\cS_0(\Gamma,\mu),A(\Gamma,\mu),L)$.

Unfortunately, we were unable to solve this question due to two main problems.
First, projecting an element in $A_{n}$ onto $A_{n-1}$ can increase the operator norm, similar to how truncating a Fourier series can increase the sup norm.
Second, these algebras are non-nuclear, so we are missing the finite dimensional approximations which were essential to the results \cite{MR2055927, MR2520541,MR4015960,MR3718439,MR3778347,MR4266364}.

In analogous situations \cite{MR1727499, 1612.02404}, one replaces the Lip norm $L$ with another Lip norm $\cL$ produced by a Minkowski functional.
In our setup, we choose $\cL$ so that it agrees with $L$ on the spaces $A_n \ominus A_{n-1}$ of homogeneous loops, i.e., spans of loops of the same length $n$.
While this produces a less canonical compact quantum metric space, 
these adjusted quantum metrics take advantage of the intrinsic finite-dimensional spaces $A_n \ominus A_{n-1}$ of homogeneous loops, which replace the finite dimensional approximations in the nuclear setting.
In \S\ref{sec:lipnorm} below, we are able to prove that these compact quantum metric spaces converge in quantum Gromov-Hausdorff distance to the desired limit.

\begin{thmalpha}
\label{thm:Convergence}
If the sequence of vertex-weighted pointed graphs $(\Gamma_n,\mu_n,v_0^n)$ converges locally uniformly to $(\Gamma,\mu,v_0)$, then the induced compact quantum metric spaces $(\cS_0(\Gamma_n, \mu_n), A(\Gamma_n,\mu_n), \cL_n)$ converge in quantum Gromov-Hausdorff distance to $(\cS_0(\Gamma,\mu), A(\Gamma,\mu), \cL)$.
\end{thmalpha}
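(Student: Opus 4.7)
The plan is to reduce quantum Gromov-Hausdorff convergence to a finite-dimensional matching problem by exploiting the filtration $A = \bigcup_m A_m$. The adjusted Lip-norm $\cL$ is constructed via a Minkowski functional so that its restriction to each homogeneous space $A_m \ominus A_{m-1}$ agrees with $L$, and in particular so that the $L^2$-orthogonal truncations $\pi_m : A \to A_m$ are uniformly Lip-bounded for $\cL$. Combined with the fact that $(\cS_0, A, \cL)$ is a compact quantum metric space, Rieffel's compactness criterion then implies that for every $\epsilon > 0$ there exists $M = M(\epsilon)$ such that every $\cL$-unit vector with zero trace lies within operator-norm distance $\epsilon$ of $A_M$. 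I will need the analogous truncation estimate to hold with $M$ independent of $n$ for the family $(\Gamma_n, \mu_n, v_0^n)$; this should be inherited from the uniformity of the Haagerup-type inequality behind Theorem \ref{thm:CompactQuantumMetricSpace}, given that local uniform convergence keeps the relevant weights uniformly bounded on $M$-balls.

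Next, I use local uniform convergence to identify finite-dimensional truncations. Fix $M$ and take $n$ large enough that the combinatorial $M$-ball around $v_0^n$ in $\Gamma_n$ is isomorphic to the $M$-ball around $v_0$ in $\Gamma$, with $|\mu_n(v) - \mu(v)| < \delta_n \to 0$ on this ball. Sending a loop in $\Gamma_n$ of length at most $M$ to its combinatorial twin in $\Gamma$ defines a linear isomorphism $\Phi_n^M : A_M(\Gamma_n, \mu_n) \to A_M(\Gamma, \mu)$. Because $A_M$ is finite-dimensional and the operator norm of any element is a continuous function of the finitely many scalars $a_\epsilon$ appearing in the Toeplitz-Cuntz-Krieger generators, $\Phi_n^M$ is an asymptotic isometry for both the operator norm and for $\cL$. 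The traces and inner products converge similarly, so the Monge-Kantorovich metrics on the state spaces of $A_M(\Gamma_n, \mu_n)$ and $A_M(\Gamma, \mu)$ agree up to $o(1)$ under $\Phi_n^M$.

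Finally, I construct an admissible Lip-norm on $A(\Gamma_n, \mu_n) \oplus A(\Gamma, \mu)$ of the form
\[
L^{\mathrm{br}}(a, b) := \max\bigl\{\cL_n(a),\ \cL(b),\ \eta_n^{-1}\|\Phi_n^M(\pi_M a) - \pi_M b\|\bigr\},
\]
where $\eta_n \to 0$ is chosen large enough to dominate both the truncation error $\epsilon$ and the asymptotic-isometry errors from the previous paragraph. A standard verification, modeled on Rieffel's original arguments, shows that $L^{\mathrm{br}}$ restricts to $\cL_n$ and $\cL$ on the respective summands, and that the Hausdorff distance between the two state spaces, measured in the Monge-Kantorovich metric associated to $L^{\mathrm{br}}$, is bounded by $O(\epsilon + \eta_n)$. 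Sending $n \to \infty$ followed by $\epsilon \to 0$ then yields the claimed convergence in quantum Gromov-Hausdorff distance.

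The principal obstacle is two-fold. First, verifying that the truncations $\pi_m$ are uniformly Lip-bounded for $\cL$ is the precise design consideration behind the Minkowski-gauge adjustment carried out in \S\ref{sec:lipnorm}, and must be established explicitly from that construction. Second, because the algebras are non-nuclear, the standard finite-dimensional approximation tools available in \cite{MR2055927, MR2520541,MR4015960,MR3718439,MR3778347,MR4266364} are unavailable, so the required approximation structure must come entirely from the intrinsic filtration $\{A_m\}$. Obtaining a uniform-in-$n$ Haagerup-type truncation estimate on $M$-balls — so that the truncation level $M(\epsilon)$ can be taken independent of the graph — is the most delicate point, and is the reason the weighted Benjamini-Schramm-style notion of local uniform convergence (rather than a weaker notion) is precisely what the argument needs.
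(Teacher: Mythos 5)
Your proposal is correct and follows the same overall architecture as the paper's proof: a truncation estimate placing the $\cL$-unit ball within $\epsilon$ of $A_M$ uniformly over the family of graphs, identification of the finite-dimensional truncations $A_M(\Gamma_n,\mu_n)$ with $A_M(\Gamma,\mu)$ via the local isomorphisms of $M$-balls, and an $\varepsilon/3$ triangle-inequality argument. Two differences are worth noting. First, for the finite-dimensional step the paper does not build an explicit bridge Lip-norm $L^{\mathrm{br}}$ on the direct sum; it transports everything onto a single real vector space $V_K$ spanned by self-adjoint combinations of loops of length at most $K$ and invokes Rieffel's continuous-field criterion \cite[Theorem 11.2]{MR2055927}, using Lemma \ref{lem:ConvexHausdorffConvergence} to show that the convex sets $\cC_K$ defining the Minkowski gauges converge in Hausdorff distance, hence that $\cL_n\to\cL$ on $V_K$. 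Your bridge construction is an equally standard device, but you would still need this convex-hull lemma (or an equivalent): the convergence $\cL_n\to\cL$ is immediate only on each homogeneous piece $W_k$ (where $\cL_n=L_n$), and passing from there to all of $A_K$ is exactly what Lemma \ref{lem:ConvexHausdorffConvergence} supplies. Second, and more substantively, you attribute the uniformity of the truncation level $M(\epsilon)$ in $n$ to local uniform convergence keeping the weights bounded on $M$-balls. The paper's Lemma \ref{lem:TailBound} is stronger and cleaner: the Ozawa--Rieffel constant is $C=1$ for \emph{every} vertex-weighted pointed graph with minimally weighted basepoint (Lemma \ref{lem:OzawaRieffelConstant}), so $M(\epsilon)$ is independent of the graph and weighting altogether, with no appeal to convergence of the $\mu_n$. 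This matters, since your phrasing leaves open that $M$ could drift with $n$, which would entangle the two limits in the final $\varepsilon/3$ argument.
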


\subsection*{Application to subfactor theory}

The original motivation in our two articles  \cite{MR3624399, MR3266249} was to develop a connection between subfactor theory and $\rm C^*$-algebras with a view toward connections to Connes' non-commutative geometry \cite{MR1303779}.
The standard invariant of a finite index subfactor forms a shaded subfactor planar algebra \cite{math.QA/9909027}.
Here, we work with unshaded unitary factor planar algebras, which correspond to symmetrically self-dual bifinite bimodules over some factor \cite{MR3405915,MR4133163}.
The more categorically minded reader may choose to work directly with a unitary tensor category as in \cite{MR4139893}.

A special application of the setup in this article is when:
\begin{itemize}
\item
$\Gamma$ is the principal graph of an unshaded unitary factor planar algebra $\cP_\bullet$, 
\item
$\mu$ is a quantum dimension vertex-weighting which satisfies the Frobenius-Perron condition, and
\item
$v_0 = \star$, the distinguished vertex corresponding to the empty diagram/monoidal unit object.
\end{itemize}
In this case, the cutdown $\cS_0$ of $\cS(\Gamma, \mu)$ at $v_0=\star$ is isomorphic to the Guionnet-Jones-Shlyakhtenko (GJS) $\rm C^*$-algebra \cite{MR3624399, MR3266249}.
This algebra is the $\rm C^*$-completion of the graded algebra $\Gr_0$ arising from their diagrammatic reproof \cite{MR2732052} of Popa's celebrated subfactor reconstruction theorem \cite{MR1334479}.


When $\cP_\bullet = \cN\cC_\bullet$, the factor planar algebra of non-commuting polynomials on self-adjoint variables $X_1,\dots, X_n$, $\Gr_0$ is exactly the algebra of non-commutative polynomials, and $\cS_0$ is Voiculescu's reduced $\rm C^*$-algebra generated by $n$ free semi-circular elements.
Thus we may view $\Gr_0\subset \cS_0$ as a smooth subalgebra of polynomials inside the algebra of non-commuatative continuous functions.
We are thus in a position to study Connes' non-commutative geometry via Dirac operators and spectral triples \cite{MR1303779}.

In subfactor theory, there are many examples of local uniform graph convergence.
For instance, we have examples coming from quantum groups at roots of unity \cite{MR0696688,MR936086,MR999799,MR1090432,MR1660937}, continuous families of subfactors \cite{MR2314611}, and from composites at a fixed index \cite{MR1386923,MR3345186}.
A particular application to subfactor theory is the following corollary of Theorem \ref{thm:Convergence}, which holds in much more generality than stated.

\begin{coralpha}
For a fixed $n$, the GJS $\rm C^*$-algebra $\cS_0(A_n,\exp(\pi i/(n+1)))$ of the Temperley-Lieb-Jones (TLJ) (sub)factor planar algebra gives a compact quantum metric space when equipped with the Lip norm $L_n$ from the Dirac number operator.
Adjusting our Lip norm to $\cL_n$ as in Theorem \ref{thm:Convergence}, the associated compact quantum metric spaces converge in quantum Gromov-Hausdorff distance to the adjusted compact quantum metric space of the GJS $\rm C^*$-algebra of TLJ at $q=1$.
\end{coralpha}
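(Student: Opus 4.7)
The plan is to realize this corollary as a direct application of Theorems~\ref{thm:CompactQuantumMetricSpace} and~\ref{thm:Convergence} to the sequence of vertex-weighted pointed graphs coming from the Temperley-Lieb-Jones planar algebras. Concretely, for each $n \geq 3$, take $\Gamma_n = A_n$ with vertices labeled $1, 2, \dots, n$, basepoint $v_0^n = \star$ the vertex labeled $1$ (the monoidal unit), and quantum-dimension weighting
\[
\mu_n(k) = [k]_{q_n} = \frac{\sin(k\pi/(n+1))}{\sin(\pi/(n+1))}, \qquad q_n := \exp(\pi i/(n+1)).
\]
The limit data are $\Gamma = A_\infty$ (the half-infinite path), $v_0 = 1$, and $\mu(k) = k$. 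Simplicity~\eqref{eq:SimplicityCondition} at interior vertices follows from the Perron-Frobenius identity $[2]_{q_n}\mu_n(k) = \mu_n(k-1) + \mu_n(k+1)$ combined with $[2]_{q_n} < 2$; at the endpoint $k=n$ it reduces to $\mu_n(n) = 1 < 2\cos(\pi/(n+1)) = \mu_n(n-1)$, which is valid for $n\geq 3$. Theorem~\ref{thm:CompactQuantumMetricSpace} therefore supplies each compact quantum metric space $(\cS_0(A_n,q_n), A(A_n,q_n), L_n)$, as well as the analogous limit structure on $(A_\infty, \mu)$.

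To feed these into Theorem~\ref{thm:Convergence}, I would check that $(\Gamma_n, \mu_n, v_0^n)$ converges locally uniformly to $(A_\infty, \mu, 1)$ in the sense of Definition~\ref{defn:LocalUniformConvergence}. For any radius $R$, as soon as $n > R$ the ball of radius $R$ about $\star$ in $A_n$ is isomorphic as a pointed graph to the corresponding ball in $A_\infty$, namely the path on vertices $\{1, 2, \ldots, R+1\}$. For each fixed $k$, the Taylor expansion of $\sin$ near $0$ shows
\[
\mu_n(k) = \frac{\sin(k\pi/(n+1))}{\sin(\pi/(n+1))} \xrightarrow{n \to \infty} k = \mu(k),
\]
which yields the pointwise (hence uniform on any fixed finite ball) convergence of vertex weights required by the definition.

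Granting these two ingredients, Theorem~\ref{thm:Convergence} delivers convergence of the adjusted metric spaces $(\cS_0(A_n, q_n), A(A_n, q_n), \cL_n)$ in quantum Gromov-Hausdorff distance to the adjusted GJS compact quantum metric space of TLJ at $q=1$; the identification of $\cS_0(A_n,q_n)$ with the GJS $\rm C^*$-algebra is precisely the setup recalled in the excerpt. The only real care needed is bookkeeping: confirming that Definition~\ref{defn:LocalUniformConvergence} is phrased so that basepoint-preserving ball isomorphisms together with pointwise weight convergence (as above) suffice, and handling by hand the small cases where~\eqref{eq:SimplicityCondition} fails (notably $n=2$, where $[2]_{q_2}=1$ and the loop algebra is degenerate). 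No new substantive analysis beyond the two main theorems is expected to be required.
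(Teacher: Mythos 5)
Your proposal is correct and follows essentially the same route as the paper, which treats this corollary as an immediate consequence of Example \ref{exs:GraphConvergence}(1) (local uniform convergence of the $A_n$ Coxeter--Dynkin diagrams with Frobenius--Perron weights $[k]_{q_n} \to k$ to $A_\infty$), the identification of the GJS $\rm C^*$-algebra with the free loop algebra of the principal graph, and Theorems \ref{thm:CompactQuantumMetricSpace} and \ref{thm:Convergence}. One trivial slip: the interior-vertex simplicity check requires $[2]_{q_n} > 1$ (which holds for $n \geq 3$ since $[2]_{q_n} = 2\cos(\pi/(n+1))$), not $[2]_{q_n} < 2$ as you wrote.
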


\subsection*{Acknowledgements.}
The authors would like to thank 
Farzad Fathizadeh,
Matilde Marcolli,
Marc Rieffel,
and
Robin Tucker-Drob
for helpful conversations.
David Penneys was supported by NSF DMS grants 1500387/1655912 and 1654159.

\section{Background}

\subsection{Compact quantum metric spaces}

We rapidly recall the notions of 
Gromov-Hausdorff distance, order unit space, compact quantum metric space, and quantum Gromov-Hausdorff distance from \cite{MR2055927}.

\begin{defn}
Suppose we have two compact subsets $X,Y$ of a metric space $(Z, \rho)$.
The Hausdorff distance between $X$ and $Y$ is given by
$$
\dist_{\text{H}}(X, Y) 
:= 
\inf\set{r>0}
{
X\subset N_r(Y) 
\text{ and } 
Y\subset N_r(X)
},
$$
where for $A\subset Z$, $N_r(A)$ is the $r$-neighborhood of $A$:
$$
N_r(A):=\set{z\in Z}{\text{there is an }a\in A\text{ with }\rho(z,a)<r}.
$$
\end{defn}

\begin{defn}
Now suppose $(X, \rho_X)$ and $(Y, \rho_Y)$ are independent compact metric spaces.
Let $X\amalg Y$ be the disjoint union of $X$ and $Y$,
and let $\cM(\rho_X,\rho_Y)$ be the set of all metrics $\rho$ on $X\amalg Y$ such that
\begin{itemize}
\item $\rho$ induces the disjoint union topology on $X\amalg Y$, and
\item $\rho|_X = \rho_X$ and $\rho|_Y = \rho_Y$.
\end{itemize}
The Gromov-Hausdorff distance between $(X, \rho_X)$ and $(Y, \rho_Y)$ is given by
$$
\dist_{\text{GH}}(X, Y) 
= 
\inf \set{\dist^\rho_{\text{H}}(X, Y)}
{
X, Y \subset (X\amalg Y, \rho) 
\text{ and }
\rho \in \cM(\rho_X, \rho_Y)
}.
$$
\end{defn}

\begin{defn}
An order unit space $(V, e)$ is a real vector space $V$ together with a partial order $\leq$ with an element $e$ called the order unit which satisfies
\begin{itemize}
\item
(order unit) 
For every $v\in V$, there is an $r>0$ such that $v\leq re$.
\item
(Archimedian property) 
If $v\leq re$ for all $r>0$, then $v\leq 0$.
\end{itemize}
An order unit space has a norm, which is given by
$\|v\| 
= 
\inf\set{r>0}{-r e \leq v \leq re}$.
\end{defn}

\begin{ex}
Suppose $\cA$ is a unital $\rm C^*$-algebra.
Then the self-adjoint elements $\cA_{\text{s.a.}}$ of $\cA$ form an order unit space with order unit $1_\cA$.
\end{ex}

\begin{defn}
Suppose $(V,e)$ is an order unit space.
\begin{itemize}
\item A state of $(V,e)$ is a continuous linear functional $\mu\in V^*$ such that $\mu(e) = 1 = \|\mu\|$.
The space of states on $(V,e)$ is denoted $S(V)$.
Given a seminorm $L$ on $(V,e)$, it induces a $[0,\infty]$-valued metric on $S(V)$ by
$$
\rho_L(\mu, \nu) = \sup\set{|\mu(v)-\nu(v)|}{L(v)\leq 1}.
$$
\item A Lip-norm on $(V,e)$ is a seminorm $L$ on $V$ such that
\begin{enumerate}[(1)]
\item
$L(v) = 0$ if and only if $v\in\bbR e$.
\item
The topology on $S(V)$ induced by $\rho_L$ is the weak-* topology.
\end{enumerate}
Note that (2) above implies $\rho_L$ is a genuine metric on $S(V)$ which takes only finite values.
\end{itemize}
\end{defn}

\begin{defn}
A compact quantum metric space is a triple $(V, e, L)$ where $(V,e)$ is an order unit space and $L$ is a Lip-norm on $(V,e)$.
\end{defn}

The following criterion will be useful in determining whether $L$ is a Lip-norm on a unital $\rm C^*$-algebra $\cA$.
A \emph{unital pre $\rm C^*$-algebra} is a pair $(A,\phi)$ where $A$ is a unital complex $*$-algebra and $\phi: A\to \bbC$ is a positive linear functional ($\phi(a^*a)\geq 0$) with $\phi(1_A)=1_\bbC$ such that the left action of $A$ on $L^2(A,\phi)$ is by bounded operators.

\begin{prop}[{\cite[Prop.~1.3]{MR2164594}}]
\label{prop:totallybounded}
Let $(A,\phi)$ be a unital pre $\rm C^*$-algebra, and let $L$ be a seminorm on $A$.  
Then $L$ is a Lip-norm if and only if
$$
\set{a \in A}{L(a) \leq 1 \text{ and } \phi(a) = 0}
$$
is a norm totally bounded subset of $A$.
\end{prop}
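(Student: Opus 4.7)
The plan is to exploit the Arzel\`a-Ascoli-type duality between seminorms $L$ on $A$ and metrics $\rho_L$ on the state space $S(A)$. Write $\cB := \set{a \in A}{L(a)\leq 1 \text{ and } \phi(a)=0}$. I would begin with the preliminary observation that the weak-$*$ topology on $S(A)$ is always at least as fine as $\rho_L$, because each evaluation $\mu \mapsto \mu(a)$ is $\rho_L$-Lipschitz with constant $L(a)$; the nontrivial content of the equivalence is thus the reverse comparison. Using that any two states agree on $\bbC 1_A$, one obtains the key reformulation
$$
\rho_L(\mu,\nu)
=
\sup\set{|\mu(a)-\nu(a)|}{L(a)\leq 1}
=
\sup_{a\in\cB}|\mu(a)-\nu(a)|,
$$
so that $\rho_L$-convergence of nets in $S(A)$ is exactly uniform convergence of state evaluations on $\cB$.

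For the forward implication, assume $L$ is a Lip-norm. Then $\rho_L$ induces the weak-$*$ topology on $S(A)$, so $(S(A),\rho_L)$ is compact and has finite diameter $D$. For $a \in \cB$, the estimate $|\mu(a)| = |\mu(a)-\phi(a)| \leq \rho_L(\mu,\phi)L(a) \leq D$ combined with $\|a\| = \sup_{\mu \in S(A)}|\mu(a)|$ for self-adjoint $a$ (and a decomposition into real and imaginary parts in general) shows $\cB$ is norm-bounded. The Gelfand-type family $\{\hat a : a \in \cB\}\subset C(S(A))$, $\hat a(\mu) = \mu(a)$, is then uniformly bounded and equicontinuous ($\rho_L$-Lipschitz with constant $\leq 1$), so Arzel\`a-Ascoli on the compact space $S(A)$ delivers sup-norm total boundedness of this family; transferring back via the isometric (on the self-adjoint part) identification $a \mapsto \hat a$ yields norm total boundedness of $\cB$.

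For the converse, suppose $\cB$ is norm totally bounded. I would first verify the kernel condition $L(v)=0 \iff v \in \bbR 1_A$: if $L(v) = 0$ for some $v\notin\bbR 1_A$, then $v-\phi(v)1_A\neq 0$ generates an unbounded real line inside $\cB$, contradicting total boundedness. For the topological condition, given a weak-$*$ convergent net $\mu_\alpha\to\mu$ and $\varepsilon>0$, choose a finite $\varepsilon/3$-net $a_1,\dots,a_n$ for $\cB$ in norm and $\alpha_0$ such that $|\mu_\alpha(a_i)-\mu(a_i)|<\varepsilon/3$ for all $i$ once $\alpha\geq\alpha_0$. For an arbitrary $a\in\cB$, pick $a_i$ with $\|a-a_i\|<\varepsilon/3$; using $\|\mu_\alpha\|,\|\mu\|\leq 1$ the standard three-term bound gives $|\mu_\alpha(a)-\mu(a)|<\varepsilon$, whence $\rho_L(\mu_\alpha,\mu)\leq\varepsilon$ by the key reformulation above. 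The finiteness of $\rho_L$ on $S(A)\times S(A)$ falls out similarly from norm-boundedness of $\cB$.

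The main obstacle is the implicit passage from sup-norm total boundedness of $\{\hat a\}\subset C(S(A))$ back to $\rm C^*$-norm total boundedness of $\cB\subset A$: this is transparent on the self-adjoint part, where the Gelfand-type embedding $a \mapsto \hat a$ is isometric, but the complex case requires decomposing $a = \text{Re}(a)+i\,\text{Im}(a)$ together with a compatibility condition on $L$ with the $*$-operation, or (as in Ozawa-Rieffel) reducing to the self-adjoint order-unit space $A_{\text{s.a.}}$ from the outset. The cleanest detailed write-up would perform this reduction first and apply the argument above to $A_{\text{s.a.}}$.
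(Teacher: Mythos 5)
The paper does not prove this proposition; it is imported verbatim from Ozawa--Rieffel \cite[Prop.~1.3]{MR2164594}, which in turn is a repackaging of Rieffel's characterization of Lip-norms via total boundedness of the Lipschitz ball modulo scalars. Your argument is a faithful reconstruction of that standard proof: the reformulation $\rho_L(\mu,\nu)=\sup_{a\in\cB}|\mu(a)-\nu(a)|$, the Arzel\`a--Ascoli argument on the compact metric space $(S(A),\rho_L)$ for the forward direction, and the $\varepsilon/3$ net argument plus compact-to-Hausdorff for the converse are exactly the ingredients of the cited proof, and your closing remark correctly identifies the one genuine technical wrinkle (passing between the complex algebra and the self-adjoint order-unit space, which Rieffel avoids by working with $A_{\rm s.a.}$ throughout; alternatively one can use the two-sided bound $\tfrac12\|a\|\leq\sup_{\mu\in S(A)}|\mu(a)|\leq\|a\|$ so that total boundedness transfers without assuming $L(a^*)=L(a)$). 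Two small points. First, your preliminary observation has the comparison of topologies stated backwards: the fact that each evaluation $\mu\mapsto\mu(a)$ is $\rho_L$-Lipschitz shows that the $\rho_L$-topology is at least as fine as the weak-$*$ topology, and the nontrivial content is that weak-$*$ convergence forces $\rho_L$-convergence --- which is indeed what your $\varepsilon/3$ argument then proves, so only the wording, not the logic, is off. Second, in the converse direction your verification of the kernel condition (and the identity $\mu(a)-\nu(a)=\mu(a-\phi(a)1_A)-\nu(a-\phi(a)1_A)$ underlying the key reformulation) silently uses $L(1_A)=0$; this cannot be deduced from total boundedness of $\cB$ and is an implicit hypothesis of the statement as transcribed here (Rieffel assumes $L(e)=0$ explicitly), so it is worth recording as a standing assumption rather than something to be proved.
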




\begin{defn}[{\cite[Sections 3 and 4]{MR2055927}}]
Suppose we have compact quantum metric spaces $(V, e_V, L_V)$ and $(W, e_W, L_W)$.
Then $(V\oplus W, (e_V, e_W))$ is an order unit space.
Let $\cM(V, W)$ be the set of all Lip-norms $L$ on $V\oplus W$ which induce $L_V$ on $(V, e_V)$ and $L_W$ on $(W, e_W)$.
The quantum Gromov-Hausdroff distance between $(V, e_V, L_V)$ and $(W, e_W, L_W)$ is
$$
\dist_{q}(V, W) 
:= 
\inf \set{\dist_H^{\rho_L}(S(V), S(W))}{L\in \cM(L_V, L_W)}.
$$
\end{defn}

The following lemma  
will help in providing important estimates later. 

\begin{lem}
\label{lem:DistQFromDistH}
Let $(V,e,L)$ be a compact quantum metric space, and let $W \subseteq V$ be a unital subspace ($e\in W$) such that $(W,e,L|_W)$ is a compact quantum metric space.
If $\phi \in S(V)$, then 
\[
\dist_q (V,W) \leq 2 \dist_H^{\|\cdot\|}(
\{a \in V \mid L(a) \leq 1 \text{ and } \phi(a)=0\},
\{a \in W \mid L|_W(a) \leq 1 \text{ and } \phi(a)=0\}
).
\]
\end{lem}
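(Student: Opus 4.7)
Write $r$ for the Hausdorff distance on the right-hand side and $B_V, B_W$ for the two sets whose distance is measured. My plan is to exhibit one bridge Lip-norm $\tilde L \in \cM(L, L|_W)$ on the order unit space $(V \oplus W, (e, e))$ and show that the embedded copies of $S(V)$ and $S(W)$ lie at Hausdorff distance at most $r$ in the induced metric $\rho_{\tilde L}$; this actually sharpens the claimed factor of $2$ to $1$. Concretely, fix a small $\varepsilon > 0$ and set
\[
\tilde L(v, w) := \max\bigl\{L(v),\ L|_W(w),\ \tfrac{1}{r + \varepsilon}\|v - w\|\bigr\}.
\]

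First I would verify that $\tilde L$ is a Lip-norm. As a max of three seminorms it is a seminorm; its kernel is $\mathbb{R}\cdot(e, e)$, since $L(v) = L(w) = 0$ forces $v, w \in \mathbb{R} e$ while $\|v - w\| = 0$ equalizes the scalars. For the weak-$\ast$ criterion of Proposition \ref{prop:totallybounded}, I would test against the extended state $\tilde\phi(v, w) := \phi(v)$: if $\tilde L(v, w) \le 1$ and $\tilde\phi(v, w) = 0$, then $v \in B_V$ (totally bounded by hypothesis) and $|\phi(w)| = |\phi(w - v)| \le r + \varepsilon$, so the decomposition $w = (w - \phi(w)e) + \phi(w)e$ places $w$ in the sum of the totally bounded set $B_W$ with a bounded multiple of $e$, which is itself totally bounded.

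The crux is checking that $\tilde L$ induces $L$ on the $V$-factor via the quotient formula $L(v) = \inf_{w \in W}\tilde L(v, w)$; the inequality $\ge$ is immediate, and I expect the reverse to be the main obstacle. Given $v \in V$ with $L(v) = s > 0$, I would decompose $v = v_0 + \phi(v)e$, observe that $v_0/s \in B_V$, invoke the Hausdorff hypothesis to select $u \in B_W$ with $\|v_0/s - u\| \le r + \varepsilon$, and set $w := s u + \phi(v) e \in W$. A short calculation then gives $L|_W(w) \le s$ and $\|v - w\| \le s(r + \varepsilon)$, so $\tilde L(v, w) \le s = L(v)$. This is where the normalizing state $\phi$ from the hypothesis actually enters. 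The induced norm on the $W$-factor is immediate from $W \subseteq V$ by taking $v = w$.

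Finally, embedding $\mu \in S(V)$ as $\tilde\mu := \mu \circ \pi_V$ and similarly for $W$, I would pair each $\mu \in S(V)$ with the restricted state $\nu := \mu|_W \in S(W)$ (which is a state because $e \in W$), so that for all $(v, w)$ with $\tilde L(v, w) \le 1$,
\[
|\tilde\mu(v, w) - \tilde\nu(v, w)| = |\mu(v - w)| \le \|v - w\| \le r + \varepsilon.
\]
In the opposite direction, I would extend each $\nu \in S(W)$ to a state on $V$ via the Hahn-Banach theorem for order unit spaces and argue symmetrically. Sending $\varepsilon \downarrow 0$ yields $\dist_q(V, W) \le r$, which is stronger than $2r$. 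The degenerate case $r = 0$, where $B_V = B_W$ and the two CQMS structures essentially coincide, is handled by a separate limiting argument.
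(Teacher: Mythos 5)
Your proposal is correct, but it takes a genuinely different route from the paper. The paper's proof is essentially a two-line citation: it invokes a known theorem (Latr\'emoli\`ere, [Thm.~6.3] of the cited reference) giving $\dist_q(V,W) \leq 2\,\dist_H^{\|\cdot\|}(\set{a\in V}{L(a)\leq 1}, \set{a\in W}{L|_W(a)\leq 1})$, and then compares the full Lip-balls with the $\phi$-normalized ones; the factor of $2$ is inherited from that theorem. You instead reconstruct the bound from scratch by exhibiting an explicit bridge Lip-norm $\tilde L(v,w) = \max\{L(v), L|_W(w), (r+\varepsilon)^{-1}\|v-w\|\}$ in $\cM(L, L|_W)$, in the spirit of Rieffel's original bridge constructions. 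Your verification is sound: the kernel computation, the use of $\phi$ to split off the $\mathbb{R}e$-component both in the total-boundedness check and in the quotient-seminorm computation $L(v)=\inf_w \tilde L(v,w)$ (this is exactly where the normalized sets $B_V, B_W$ and the state $\phi$ earn their keep), and the pairing of states via restriction and order-unit Hahn--Banach extension are all correct. What your approach buys is self-containedness and a sharper constant ($1$ in place of $2$), which of course still implies the stated inequality; what it costs is length, plus one small point of hygiene: Proposition \ref{prop:totallybounded} as stated in the paper is for unital pre-$\rm C^*$-algebras, so for the order unit space $V\oplus W$ you should instead cite Rieffel's total-boundedness criterion for order unit spaces (the image of $\set{x}{\tilde L(x)\leq 1}$ in $(V\oplus W)/\mathbb{R}(e,e)$ is totally bounded), which your argument in fact establishes. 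Also note that no separate argument is needed when $r=0$, since your $\varepsilon>0$ already handles that case.
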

\begin{proof}
By \cite[Thm.~6.3]{MR3413867}, we have that
\[
\dist_q (V,W) \leq 2 \dist_H^{\|\cdot\|}(
\{a \in V \mid L(a) \leq 1\},
\{a \in W \mid L|_W(a) \leq 1 \}
).
\]
However, by the discussion preceding \cite[Def.~3.14]{MR3413867}, we have that
\begin{align*}
   & \dist_H^{\|\cdot\|}(
   \{a \in V \mid L(a)\leq 1 \},
   \{a \in W \mid L|_W(a) \leq 1 \}
   )\\
   & \quad \leq \dist_H^{\|\cdot\|}(
   \{a \in V \mid L(a) \leq 1 \text{ and } \phi(a)=0\},
   \{a \in W \mid L|_W(a) \leq 1 \text{ and } \phi(a)=0\}
   ),
\end{align*}
which completes the proof.
\end{proof}

\subsection{The Ozawa-Rieffel criterion}{\label{sec:OR}}

In \cite{MR2164594}, Ozawa and Rieffel give criteria to determine when a filtered $*$-algebra with a tracial state gives a compact quantum metric space.
We now recall their setup and theorem.

\begin{assumptions}
\label{assume:FilteredAlgebra}
For this section, $A$ is a unital complex $*$-algebra equipped with a trace $\tr: A\to \bbC$ such that $(A,\tr)$ is a pre $\rm C^*$-algebra.
We further assume:
\begin{itemize}

\item 
$A$ is filtered by $*$-closed finite dimensional subspaces.
That is, there are finite dimensional $*$-closed subspaces $A_0 \subseteq A_1 \subseteq A_2 \subseteq \cdots$ whose union is $A$ which satisfy $A_m \cdot A_n \subseteq A_{m+n}$.

\item
The ground algebra $A_0$ is trivial, i.e., $A_0 = \bbC 1_A$.

\item 
The left (and right) action(s) of $A$ on $(A, \tr)$ is bounded in $\|\cdot\|_2$, and thus extends to an action on $\cH = L^{2}(A, \tr)$ by bounded operators.

\end{itemize}
\end{assumptions}

Under these assumptions, we set $W_n = A_{n} \ominus A_{n-1}$ which is finite dimensional, and we let $P_{n}$ be the orthogonal projection from $\cH$ onto $W_n$.

\begin{defn}
The \emph{Dirac number operator} is defined by $N := \sum_{n \geq 0} nP_{n}$, which is closable with dense domain.
\end{defn}

One has the following lemma due to \cite{MR2164594}.

\begin{lem}[{\cite[Lemma 1.1]{MR2164594}}]
\label{lem:BoundedCommutator}
For every $a \in A$, $[N,a]$ is densely defined and extends to a bounded operator on $\cH$.
\end{lem}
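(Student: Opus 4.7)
The plan is to obtain a block decomposition of $a$ with respect to the orthogonal decomposition $\cH = \bigoplus_{n\geq 0} W_n$ and show that only finitely many ``diagonals'' of this decomposition are nonzero, with a width controlled by the filtration level of $a$.

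First I would fix $a \in A$ and choose $k$ so that $a \in A_k$, noting $a^* \in A_k$ as well since $A_k$ is $*$-closed. The commutator $[N,a]$ is initially defined on the algebraic direct sum $\bigoplus_n W_n$, which is dense in $\cH$ and contains $\Dom(N)$ on each summand since $N$ acts by the scalar $n$ on $W_n$. For $i,j \geq 0$, set $Q_{ij} := P_i a P_j$, so that (as quadratic forms on $\bigoplus_n W_n$)
\[
a = \sum_{i,j} Q_{ij}, \qquad Na = \sum_{i,j} i\, Q_{ij}, \qquad aN = \sum_{i,j} j\, Q_{ij}, \qquad [N,a] = \sum_{i,j} (i-j)\, Q_{ij}.
\]

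The main step is to show $Q_{ij} = 0$ whenever $|i-j| > k$. If $i > j+k$, then for any $\xi \in \cH$ one has $aP_j\xi \in A_k \cdot A_j \subseteq A_{j+k} \subseteq A_{i-1}$, so $P_i(aP_j\xi) = 0$. If $j > i+k$, apply the same argument to $a^* \in A_k$ to get $Q_{ij}^* = P_j a^* P_i = 0$. Consequently only finitely many diagonals survive, and one can group
\[
[N,a] = \sum_{r=-k}^{k} r\, T_r, \qquad T_r := \sum_{i\geq 0} P_i\, a\, P_{i-r},
\]
where we set $P_{i-r} = 0$ for $i-r<0$.

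The routine step is bounding each $T_r$: for fixed $r$, the operators $P_i a P_{i-r}$ have pairwise orthogonal ranges (the $W_i$) and pairwise orthogonal sources (the $W_{i-r}$), so for $\xi \in \bigoplus_n W_n$,
\[
\|T_r \xi\|^2 = \sum_i \|P_i a P_{i-r}\xi\|^2 \leq \sum_i \|a\|^2 \|P_{i-r}\xi\|^2 \leq \|a\|^2 \|\xi\|^2,
\]
so $\|T_r\| \leq \|a\|$. The triangle inequality then gives
\[
\|[N,a]\| \leq \sum_{r=-k}^{k} |r|\cdot \|a\| = k(k+1)\|a\|,
\]
which shows $[N,a]$ extends to a bounded operator on $\cH$.

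The only subtle point is the orthogonality argument for the vanishing of off-diagonal blocks; everything else is bookkeeping. The fact that $A_k$ is $*$-closed is essential, since without it we would only get vanishing on one side of the diagonal. Once the block-tridiagonal structure of width $2k+1$ is established, boundedness of $[N,a]$ is immediate.
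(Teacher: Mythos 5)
Your proof is correct, and it is essentially the standard argument for \cite[Lemma 1.1]{MR2164594}, which the paper cites rather than reproves: fix $a\in A_k$, use $A_m\cdot A_n\subseteq A_{m+n}$ together with $*$-closedness of $A_k$ to kill all blocks $P_iaP_j$ with $|i-j|>k$, and bound each of the remaining $2k+1$ diagonals by $\|a\|$. The only related argument actually written out in the paper is the later supplementary diagrammatic proof (Lemma \ref{lem:DiagrammaticBoundedCommutator}) in the planar-algebra setting, which instead exhibits $[N,x]$ directly as left multiplication by a finite sum of bounded elements of the Toeplitz algebra; your route is the general one and matches the cited source.
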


Using this lemma, we define a seminorm $L$ on $A$ by $L(a) = \|[N, a]\|$.
Observe that $L$ vanishes exactly on $A_0=\bbC 1_A$.
We set $\cA = \overline{A}^{\|\cdot\|}$, and we denote by $S(\cA)$ the state space of $\cA$.
In the theory of compact quantum metric spaces, one induces a metric $\rho$ on $S(\cA)$ with values in $[0,\infty]$ by the formula
$$
\rho(\phi, \psi) = \sup\set{|\phi(x) - \psi(x)| \,}{ \, x\in A\text{ with }L(x) \leq 1}.
$$
If the topology induced by $\rho$ induces the weak-$*$ topology on $S(\cA)$, we say that $(\cA, A, L)$ is a \emph{compact quantum metric space} in the sense of Rieffel \cite{MR2055927}.  A main result of \cite{MR2164594} is the following theorem:

\begin{thm}[{\cite[Theorem 1.2]{MR2164594}}]
\label{thm:OzawaRieffel}
If there exists a $C>0$ such that for all $m, n, k \in \N$ and $a_{k} \in W_{k}$
$$
\|P_{m}a_{k}P_{n}\| \leq C\|a_{k}\|_{2},
$$
then $(\cA, A, L)$ is a compact quantum metric space.
\end{thm}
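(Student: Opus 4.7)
The plan is to verify the criterion of Proposition \ref{prop:totallybounded}: the set $B := \{a \in A : L(a) \leq 1,\ \tr(a) = 0\}$ must be shown norm totally bounded. Two complementary bounds drive the argument. The first is a \emph{Sobolev-type control from the Dirac operator}: for any $n \geq 0$ and unit $\xi \in W_n$, using $N\xi = n\xi$ and expanding $a\xi = \sum_m P_m a\xi$, one has
\[
\|[N,a]\xi\|^2 \;=\; \sum_{d \in \Z} d^2\, \|P_{n+d}\, a\, \xi\|^2 \;\leq\; L(a)^2,
\]
a uniform pointwise weighted $\ell^2$-bound on the ``rows'' of $a$. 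Specializing to $\xi = 1_A$, and using $\tr(a) = 0$ to kill $a_0 := P_0 a$, yields the global inequality $\sum_{k \geq 1} k^2\, \|a_k\|_2^2 \leq 1$, where $a_k := P_k a$. The second is the \emph{Haagerup-type operator bound}. Since $W_k = W_k^*$ and $W_k \cdot A_n \subseteq A_{k+n}$, the block $P_m a_k P_n$ vanishes unless $|m-n| \leq k$. Decomposing $a_k$ by diagonal $d = m - n$, each diagonal sum $\sum_n P_{n+d} a_k P_n$ has pairwise orthogonal sources and ranges, so its operator norm equals $\sup_n \|P_{n+d} a_k P_n\| \leq C\,\|a_k\|_2$ by hypothesis; summing over $|d| \leq k$ gives $\|a_k\| \leq (2k+1)\,C\,\|a_k\|_2$.

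Given these two estimates, total boundedness of $B$ is proved by a band-then-finite-dimensional truncation scheme. For $D \in \N$ define the band projection $T_D(a) := \sum_{|d|\leq D,\, n\geq 0} P_{n+d}\, a\, P_n$, and for $K \in \N$ define the further truncation $T_{D,K}(a)$ that additionally discards the homogeneous components $a_k$ with $k > K$; the latter lies in the finite-dimensional subspace $A_K$. Applying Cauchy-Schwarz to the Sobolev bound already yields $\|P_{n+d}\, a\, P_n\| \leq L(a)/|d|$ for $d \neq 0$. The proof then requires two uniform approximations over $a \in B$: (a) $\|a - T_D(a)\| \to 0$ as $D \to \infty$, and (b) for fixed $D$, $\|T_D(a) - T_{D,K}(a)\| \to 0$ as $K \to \infty$. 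Together with the fact that $\{T_{D,K}(a) : a \in B\}$ is a bounded subset of the finite-dimensional space $A_K$, these two approximations yield total boundedness of $B$, and the theorem follows from Proposition \ref{prop:totallybounded}.

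The main obstacle is part (a), the uniform band-tail decay. A naive Schur test using the pointwise bound $\|P_{n+d}\, a\, P_n\| \leq L(a)/|d|$ produces divergent column sums $\sum_{|d|>D} 1/|d|$, and the global Sobolev tail $\sum_{k \geq D} k^2 \|a_k\|_2^2$ is not uniformly small over $B$ (mass can concentrate at arbitrarily large $k$). Overcoming this requires a genuinely finer interplay between the Haagerup hypothesis and the commutator estimate, likely via a mollified or multi-scale truncation that simultaneously exploits the uniform operator-norm bound on individual $P_m a_k P_n$ blocks and the pointwise weighted $\ell^2$-control on all rows of $a$. This refined off-diagonal estimate is the technical heart of Ozawa-Rieffel's argument; once in place, part (b) is a routine consequence of the Haagerup bound $\|a_k\| \leq (2k+1)C\|a_k\|_2$ combined with the $\ell^2$-Sobolev tail, and the conclusion follows.
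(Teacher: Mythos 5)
The paper does not actually prove this theorem---it is imported verbatim from Ozawa--Rieffel \cite[Theorem 1.2]{MR2164594}---so the only meaningful comparison is with their argument, whose skeleton you have reconstructed correctly: reduce to Proposition \ref{prop:totallybounded}; extract the Sobolev bound $\sum_{k\geq 1}k^{2}\|a_{k}\|_{2}^{2}\leq L(a)^{2}$ from $[N,a]\widehat{1}=N\widehat{a}$; extract $\|a_{k}\|\leq(2k+1)C\|a_{k}\|_{2}$ from the diagonal decomposition with orthogonal sources and ranges; and approximate $B$ by a band truncation $T_{D}$ followed by a degree truncation $T_{D,K}$. Your step (b) is indeed routine, since $\|T_{D}(a)-T_{D,K}(a)\|\leq(2D+1)C\sum_{k>K}\|a_{k}\|_{2}\leq(2D+1)C\bigl(\sum_{k>K}k^{-2}\bigr)^{1/2}$ by Cauchy--Schwarz against the Sobolev bound. (One minor imprecision: $T_{D,K}(a)$ need not lie in $A_{K}$ itself, only in the finite-dimensional operator space spanned by the bands of $W_{1},\dots,W_{K}$; this is harmless for total boundedness.)

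The genuine gap is step (a), which you explicitly leave open; without it the criterion of Proposition \ref{prop:totallybounded} is not verified and nothing is proved. The missing idea is a Fourier-multiplier argument with respect to the gauge action $\alpha_{t}=\operatorname{Ad}(e^{itN})$. Writing $a^{(d)}=\sum_{n}P_{n+d}aP_{n}$ for the $d$-th band, one has $\alpha_{t}(a)=\sum_{d}e^{itd}a^{(d)}$, hence $a^{(d)}=\frac{1}{2\pi}\int_{0}^{2\pi}e^{-idt}\alpha_{t}(a)\,dt$, and $([N,a])^{(d)}=d\,a^{(d)}$. Therefore
\[
a-T_{M-1}(a)=\sum_{|d|\geq M}a^{(d)}
=\frac{1}{2\pi}\int_{0}^{2\pi}\Bigl(\sum_{|d|\geq M}\frac{e^{-idt}}{d}\Bigr)\,\alpha_{t}\bigl([N,a]\bigr)\,dt,
\qquad
\Bigl\|\sum_{|d|\geq M}a^{(d)}\Bigr\|\leq\frac{1}{2\pi}\Bigl\|\sum_{|d|\geq M}\frac{e^{-id(\cdot)}}{d}\Bigr\|_{L^{1}[0,2\pi]}L(a),
\]
and the kernels $\sum_{|d|\geq M}e^{-idt}/d=-2i\sum_{d\geq M}\sin(dt)/d$ are the tails of the Fourier series of the sawtooth: the partial sums are uniformly bounded and converge pointwise a.e., so the tails tend to $0$ in $L^{1}$ by dominated convergence. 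This gives $\sup\{\|a-T_{M}(a)\|:L(a)\leq 1\}\to 0$ with a rate independent of the algebra, which is exactly the estimate the present paper re-uses in Lemma \ref{lem:TailBound}. Note that this step uses only $L(a)\leq 1$ and not the Haagerup hypothesis, which enters only in your step (b) and in bounding the truncated set; your instinct that a ``finer interplay'' between the two bounds is needed for (a) is therefore slightly off---what is needed is summability of the multiplier, not the Haagerup bound. Until this (or an equivalent) estimate is supplied, the proposal is an accurate outline of the Ozawa--Rieffel strategy but not a proof.
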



\subsection{Free graph algebras}

Let $\Gamma = (V, E)$ be a countable, connected, locally finite, undirected graph, 
and let $\mu : V \to \bbR_{>0}$ be a weighting on the vertices.
(The examples in the later part of this article will be principal graphs of 
(sub)factor planar algebras with a quantum dimension weighting which satisfies the Frobenius-Perron condition.)

We now follow the construction from \cite{MR3624399}, summarized in \cite[Section 2.1]{MR3679687}.

\begin{defn}
From our undirected graph $\Gamma$, we form a directed graph $\vec{\Gamma}=(V, \vec{E}, s, t)$ as follows.
\begin{enumerate}[(1)]
\item
For each $e \in E$ which has endpoints $\alpha\neq \beta$ in $V$,
we get two directed edges $\epsilon$ and $\epsilon\op$ in $\vec{E}$ such that
$$
s(\epsilon) = t(\epsilon\op) = \alpha 
\qquad \text{ and }\qquad
s(\epsilon\op) = t(\epsilon) = \beta. 
$$

\item
For each $e \in E$ which is a loop at the vertex $\gamma$, we get one directed edge $\epsilon \in \vec{E}$ with $s(\epsilon) = t(\epsilon) = \gamma$.

\end{enumerate}
Note that $\vec{\Gamma}$ inheirits the weighting $\mu$ from $\Gamma$.
\end{defn}

The algebra $C_0(V)$ is the $\rm C^*$-algebra generated by the indicator functions $p_v$ for $v\in V$ acting on $\ell^2(V)$.
The $C_0(V)$ Hilbert bimodule $\cX$ is the completion of the space of formal finite $\bbC$-linear combinations of edges of $\vec{E}$, under the $C_0(V)$-valued inner product given by $\langle \epsilon, \epsilon'\rangle = \delta_{\epsilon, \epsilon'} p_{t(\epsilon)}$.
The action of $C_0(V)$ on $\cX$ is given by $p_\alpha \epsilon = \delta_{\alpha, s(\epsilon)} \epsilon$ and $\epsilon p_\beta = \delta_{\alpha, t(\epsilon)} \epsilon$.

We then form the Pimsner-Fock space 
$$
\cF(\vec{\Gamma}) = C_0(V) \oplus \bigoplus_{n\geq 1} \bigotimes_{C_0(V)}^n \cX.
$$
The spaces $\bigotimes_{C_0(V)}^n \cX$ are spanned by elements of the form $\epsilon_1\otimes \cdots \otimes \epsilon_n$ such that $\epsilon_1\cdots \epsilon_n$ is a path in $\vec{\Gamma}$.
For each edge $\epsilon\in \vec{E}$, we get bounded creation and annihilation operators on $\cF(\vec{\Gamma})$ given by 
\begin{align*}
\ell(\epsilon) \left(\epsilon_1\otimes \cdots \otimes \epsilon_n \right) 
&:= 
\epsilon\otimes \epsilon_1 \otimes \cdots \otimes \epsilon_n
\\
\ell(\epsilon)^* \left(\epsilon_1\otimes \cdots \otimes \epsilon_n \right) 
&:= 
\langle\epsilon| \epsilon_1\rangle_{C_0(V)} \epsilon_2\otimes \cdots \otimes \epsilon_n.
\end{align*}
The Pimsner-Toeplitz algebra $\cT(\vec{\Gamma})$ is the $\rm C^*$-algebra generated by the $\ell(\epsilon), \ell(\epsilon)^*$.

\begin{defn}
The free graph algebra $\cS=\cS(\Gamma, \mu)$ is generated by the edge elements 
$$
X_{\e} = a_\e\ell(\e) + a_\e^{-1}\ell(\e\op)
\qquad \text{where} \qquad
a_\e = \sqrt[4]{\frac{\mu(s(\e))}{\mu(t(\e))}}.
$$ 
Note that $a_\epsilon^{-1} = a_{\epsilon\op}$.
We set $X_{e} := X_{\e} + X_{\e\op}$.  
\end{defn}

We now give the structure of the free graph algebra.
There is a conditional expectation $E: \cS(\Gamma, \mu) \rightarrow C_{0}(V)$ given by $ E(x) = \sum_{v \in V(\Gamma)} \langle v | xv \rangle_{C_{0}(V)} $.  
We have the following lemma.

\begin{lem}[{\cite{MR2807103, MR3624399}}]
The algebras $\cS_{e, \mu} = C^{*}(C_{0}(V), X_{e})$ are free with amalgamation over $C_{0}(V)$ with respect to the conditional expectation $E$, i.e.
$$
\cS(\Gamma, \mu) = \underset{C_{0}(V)}{\Asterisk} (\cS_{e, \mu}, E).
$$
Furthermore, $\mu \circ E$ defines a (semifinite) a trace $\Tr$ on $\cS$.
\end{lem}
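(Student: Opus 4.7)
The plan is to recognize this lemma as an instance of Shlyakhtenko's operator-valued semicircular system construction, where the base algebra is $C_0(V)$, and to prove the two claims (freeness with amalgamation, and traciality of $\mu\circ E$) by decomposing the Pimsner--Fock space edge by edge.

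First I would write the coefficient bimodule as an orthogonal direct sum $\cX = \bigoplus_{e \in E} \cX_e$, where for each unoriented edge $e \in E$ with orientations $\epsilon, \epsilon\op$ (or a single orientation $\epsilon$ if $e$ is a loop), $\cX_e$ is the $C_0(V)$-subbimodule spanned by $\{\epsilon, \epsilon\op\}$. The standard identification (Voiculescu, and Shlyakhtenko \cite{MR2807103} in the operator-valued setting) gives a $C_0(V)$-bimodule isomorphism
\[
\cF(\vec\Gamma) \;\cong\; \underset{C_0(V)}{\Asterisk}\; \cF(\cX_e),
\]
which preserves the vacuum vectors $\bigoplus_{v\in V} p_v$. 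Under this identification, each $X_e = a_\epsilon \ell(\epsilon) + a_\epsilon^{-1}\ell(\epsilon)^* + a_{\epsilon\op}\ell(\epsilon\op) + a_{\epsilon\op}^{-1}\ell(\epsilon\op)^*$ (with the evident modification when $e$ is a loop) is built entirely out of creation/annihilation operators on the $e$-summand, so $\cS_{e,\mu} = C^*(C_0(V), X_e)$ embeds into the subalgebra acting only on $\cF(\cX_e)$.

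Second, from this picture the freeness with amalgamation over $C_0(V)$ of the family $\{\cS_{e,\mu}\}_{e\in E}$ with respect to the vacuum conditional expectation $E$ is immediate from the general fact that the reduced amalgamated free product of $\rm C^*$-subalgebras of $B(\cF(\cX_e))$ containing $C_0(V)$ is realized on $\Asterisk_{C_0(V)} \cF(\cX_e)$. Consequently $\cS(\Gamma,\mu) = \Asterisk_{C_0(V)}(\cS_{e,\mu}, E)$.

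Third, for the trace claim I would reduce to verifying traciality of $\mu\circ E$ on each $\cS_{e,\mu}$ separately, since traciality on the reduced amalgamated free product follows from traciality on each component (a standard consequence of the alternating moment formula for $E$). On a single edge $e$ the statement is essentially Shlyakhtenko's computation for free quasi-free states: the formula $a_\epsilon = \sqrt[4]{\mu(s(\epsilon))/\mu(t(\epsilon))}$ is exactly the condition that the two ``densities'' $\mu(s(\epsilon))\, a_\epsilon^{-2}$ and $\mu(t(\epsilon))\, a_\epsilon^{2}$ coincide, which is precisely what is needed for $\mu\circ E$ to satisfy the cyclic symmetry on monomials in the $\ell(\epsilon), \ell(\epsilon)^*$ via the Wick-type expansion of $E$. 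Semifiniteness follows because each compression $p_v \cS p_v$ is unital with finite $\mu\circ E(p_v) = \mu(v)$.

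The main obstacle is the bookkeeping in setting up the free-product identification of $\cF(\vec\Gamma)$ carefully in the $C_0(V)$-valued (rather than scalar) setting, in particular accounting for loops in $\Gamma$, which contribute a one-dimensional bimodule on which the same semicircular analysis applies with a single weight $a_\epsilon = 1$. Once this is in place the two claims follow from the cited references \cite{MR2807103, MR3624399} essentially without further computation.
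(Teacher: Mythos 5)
The paper does not prove this lemma at all: it is quoted directly from \cite{MR2807103, MR3624399}, so there is no in-paper argument to compare against. Your outline is a faithful reconstruction of the standard proof in those references, and it is essentially correct: decompose $\cX=\bigoplus_{e}\cX_e$ into the two-dimensional (one-dimensional for loops) sub-bimodules, use the identification of the Fock space of a direct sum with the amalgamated free product of Fock spaces to get freeness of the $\cS_{e,\mu}$ over $C_0(V)$ with respect to the vacuum expectation $E$, reduce traciality of $\mu\circ E$ to each edge algebra, and check there that $a_\epsilon=\sqrt[4]{\mu(s(\epsilon))/\mu(t(\epsilon))}$ forces $\mu(s(\epsilon))a_\epsilon^{-2}=\mu(t(\epsilon))a_\epsilon^{2}=\sqrt{\mu(s(\epsilon))\mu(t(\epsilon))}$, which is exactly the balancing condition needed (e.g.\ $\mu\circ E(X_\epsilon X_{\epsilon\op})=\mu\circ E(X_{\epsilon\op}X_\epsilon)$ at second order, and the general case via the Wick/pairing expansion). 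Two small points to tighten: (i) your displayed formula for $X_e$ has the annihilation coefficients swapped --- the correct expression is $X_e=a_\epsilon\bigl(\ell(\epsilon)+\ell(\epsilon)^*\bigr)+a_\epsilon^{-1}\bigl(\ell(\epsilon\op)+\ell(\epsilon\op)^*\bigr)$, which is self-adjoint, whereas the one you wrote is not; and (ii) the assertions that the reduced amalgamated free product is realized on $\Asterisk_{C_0(V)}\cF(\cX_e)$ and that traciality passes from the components to the amalgamated free product both deserve an explicit citation or a one-line verification via the alternating moment formula (the former also uses that $E$ has faithful GNS representation here). Neither issue affects the structure of the argument.
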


One can check that $\Tr(p_{v}) = \mu(v)$ for all $v \in V$.

\section{Free loop algebras and compact quantum metric spaces}

\subsection{Free loop algebras give compact quantum metric spaces}
\label{sec:FreeLoopAlgebras}

Let $\cH_{\Gamma, \mu} = \cF(\vec{\Gamma}) \otimes_{C_{0}(V)} \ell^{2}(V(\Gamma), \mu)$. 
Here, $\ell^{2}(V(\Gamma), \mu)$ is the Hilbert space spanned by $V$, whose inner product is given by $\langle v, w \rangle = \delta_{v = w} \mu(v)^{2}$.   
Note that paths in $\vec{\Gamma}$ give an orthogonal basis for $\cH_{\Gamma, \mu}$,
and $\|\e_{1}\otimes\cdots \otimes \e_{n}\|_{\cH_\Gamma, \mu} = \sqrt{\mu(t(\e_{n}))}.$

We introduce the following notation.
\begin{nota}
Let $\Pi$ denote the set of all paths in $\vec{\Gamma}$, and $\sigma = \e_{1}\cdots\e_{n} \in \Pi$. Let $|\sigma|=n$ denote the length of $\sigma$.  
We set:
\begin{itemize}
\item 
$\sigma\op := \e_{n}\op \dots \e_{1}\op$

\item 
$\ell(\sigma) := \ell(\e_1)\cdots\ell(\e_n)$.

\item 
$a_{\sigma} := \sqrt[4]{\frac{\mu(s(\sigma))}{\mu(t(\sigma))}} = \sqrt[4]{\frac{\mu(s(\e_{1}))}{\mu(t(\e_{n}))}}$

\item 
$X_{\sigma} := X_{\e_1}\cdots X_{\e_n}$.

\item 
$Y_{\sigma} := \sum_{\sigma = \rho\tau} a_{\rho}a_{\tau}^{-1}\ell(\rho)\ell(\tau\op)^*$.
\end{itemize}

Once it is shown in Proposition \ref{prop:changeofbasis} below that $Y_{\sigma} \in \cS(\Gamma, \mu)$, it will follow from faithfulness of the trace that $Y_{\sigma}$ is the unique element in $\cS(\Gamma, \mu)$ whose right support is under $p_{t(\sigma)}$ and satisfies $Y_\sigma \cdot p_{t(\sigma)} = a_{\sigma}\sigma$.  
The element $Y_\sigma$ is known as the \emph{Wick word} of $a_{\sigma}\sigma$.
Observe that $Y_{\sigma}^*=Y_{\sigma^{\op}}$.
\end{nota}

We now perform a change of basis from the $X$'s to $Y$'s.
These $Y$'s will be useful later on as they are eigenvectors of the number operator.

\begin{prop}[Change of basis]\label{prop:changeofbasis}
Suppose $\sigma$ is a path in $\vec{\Gamma}$ of length $n$.
\begin{enumerate}[(1)]

\item 
$Y_{\sigma} \in \cS(\Gamma, \mu)$

\item
$Y_{\sigma} = X_{\sigma} + Q$ 
where $Q$ is a linear combination of the $X_{\sigma'}$ with $|\sigma'|<n$.

\item 
$X_{\sigma} = Y_{\sigma} + P$
where $P$ is a linear combination of the $Y_{\sigma'}$ with $|\sigma'|<n$.

\end{enumerate}
\end{prop}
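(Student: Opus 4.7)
The plan is to induct on the length $n=|\sigma|$, proving all three claims simultaneously. The base cases $n=0,1$ are immediate: with the natural convention $X_{\emptyset_v}=Y_{\emptyset_v}=p_v$ for empty paths at $v\in V$, one checks directly from the definitions that $Y_\e=X_\e$ for every directed edge, so both $P$ and $Q$ vanish in low degree.

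The engine of the induction is the following \emph{multiplicative lemma}, which I would prove first: for any directed edge $\e\in\vec E$ and any path $\alpha$ with $t(\e)=s(\alpha)$,
\[
X_\e\,Y_\alpha
= Y_{\e\alpha}
+ \text{(linear combination of $Y_\beta$ with $|\beta|<|\e\alpha|$),}
\]
while $X_\e\,Y_\alpha=0$ when $t(\e)\neq s(\alpha)$ (by matching the right support of $X_\e$ with the left support of $Y_\alpha$). To prove the nontrivial case, expand $X_\e\,Y_\alpha$ using the two-term formula for $X_\e$ together with the Wick formula $Y_\alpha=\sum_{\alpha=\rho\tau}a_\rho a_\tau^{-1}\ell(\rho)\ell(\tau\op)^*$. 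The creation-creation contributions reassemble, via $a_\e a_\rho=a_{\e\rho}$ along valid paths and $\ell(\e)\ell(\rho)=\ell(\e\rho)$, into all terms of $Y_{\e\alpha}$ whose creation half is nonempty; the annihilation-annihilation product at $\rho=\emptyset$ supplies the remaining empty-creation term of $Y_{\e\alpha}$. The mixed products $\ell(\e\op)^*\ell(\rho)$ collapse via the Toeplitz relation $\ell(\e_1)^*\ell(\e_2)=\delta_{\e_1,\e_2}\,p_{t(\e_1)}$: each such term either vanishes or, when $\rho$ begins with $\e\op$, produces a scalar multiple $a_\e^{-2}$ of a Wick word $Y_{\widetilde\alpha}$ for the suffix $\widetilde\alpha$ of $\alpha$ with one initial edge stripped.

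Given the lemma, the inductive step is short. Write $\sigma=\e_1\tau$, apply the inductive hypothesis for (3) to rewrite $X_\tau=Y_\tau+\sum c_\beta Y_\beta$ with $|\beta|<n-1$, and then multiply on the left by $X_{\e_1}$; term by term, the lemma yields $X_\sigma=Y_\sigma + (\text{linear combination of $Y_{\sigma'}$ with $|\sigma'|<n$})$, which is (3). Claim (1) follows at once: $X_\sigma\in\cS(\Gamma,\mu)$ by construction, the shorter Wick words lie in $\cS(\Gamma,\mu)$ by the inductive (1), hence so does $Y_\sigma=X_\sigma-(\text{lower $Y$'s})$. Claim (2) is a formal unitriangular inversion: use the inductive (2) to rewrite each ``lower $Y$'' in (3) as a linear combination of ``lower $X$'s'', then rearrange to obtain $Y_\sigma=X_\sigma+(\text{linear combination of $X_{\sigma'}$ with $|\sigma'|<n$})$.

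The main obstacle is the coefficient bookkeeping inside the multiplicative lemma. One must verify that (i) the weights telescope correctly via $a_\rho a_\tau=a_{\rho\tau}$ so that the leading coefficient emerges as $a_{\e\alpha}$ rather than something spurious; (ii) the vertex compatibilities after each Toeplitz collapse, such as $p_{s(\e)}\ell(\widetilde\rho)=\ell(\widetilde\rho)$ on the residual creation operators, are automatically guaranteed by the path-validity condition $t(\e)=s(\alpha)$; and (iii) the collapsed remainders assemble into precisely a Wick word $Y_{\widetilde\alpha}$ with scalar coefficient $a_\e^{-2}$, rather than into a more exotic hybrid expression. Once this combinatorial bookkeeping is carried out, the remainder of the proof is a clean induction along the filtration by path length.
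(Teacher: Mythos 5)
Your argument is correct and is essentially the paper's proof: both induct on the length of $\sigma$, and the entire content in each case is the identity $X_{\e}Y_{\tau}=Y_{\e\tau}+(\text{shorter Wick word})$, obtained by splitting $X_{\e}$ into its creation and annihilation parts and collapsing the mixed terms with the Toeplitz relation $\ell(\e\op)^{*}\ell(\e')=\delta_{\e\op,\e'}\,p_{s(\e)}$; whether one then reads off (2) before (3) or, as you do, (3) before (2) is immaterial. Incidentally, the coefficient $a_{\e}^{-2}$ you attach to the collapsed remainder is what the computation actually yields (the paper's displayed formula records it as $1$), but since the proposition asserts only a linear combination this discrepancy is harmless.
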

\begin{proof}

We will prove this by induction on $|\sigma|$, the length of $\sigma$.  If $|\sigma| = 1$, then $\sigma = \e$ for some $\e \in \vec{E}$ and it is apparent that $Y_{\e} = X_{\e}$. 

Given, $\sigma$ with $|\sigma| > 1$, write $\sigma = \e\tau$ for $\e \in \vec{E}$, and write $\tau = \e'\tau'$ for $\e' \in \vec{E}$.  We see that

\begin{align*}
X_{\e}Y_{\tau} 
&= 
(a_{\e}\ell(\e) + a_{\e\op}\ell(\e\op)^{*})
\sum_{\tau = \tau_{1}\tau_{2}} 
a_{\tau_{1}}a_{\tau_{2}}^{-1}\ell(\tau_{1})\ell(\tau_{2}\op)^*
\\&= 
\sum_{\tau = \tau_{1}\tau_{2}} 
a_{\e\tau_{1}}a_{\tau_{2}}\ell(\e\tau_{1})\ell(\tau_{2}\op)^{*} 
+ 
\delta_{\e\op , \e'}
\sum_{\tau' = \tau'_{1}\tau'_{2}}
a_{\tau_{1}'}a_{(\tau'_{2})\op}^{-1}\ell(\tau_{1}')\ell((\tau'_{2})\op) 
+ 
a_{\e\op}a_{\tau}^{-1}\ell(\e\op)^{*}\ell(\tau^{op})^{*}
\\&= 
Y_{\sigma} 
+ 
\delta_{\e\op, \e'}Y_{\tau'}
\end{align*}
By induction, this proves (1) and (2).  
(3) follows directly from (2).
\end{proof}

We now work with the following assumption:

\begin{assumption}
\label{assume:MinimalWeight}
Our vertex-weighted graph $(\Gamma,\mu)$ comes with a pointing $v_0\in V$
with $\mu(v_0)=1$ which is minimal amongst vertex weights, i.e., $\mu(v_0)\leq \mu(v)$ for all $v\in V$.
\end{assumption}

\begin{defn}
From the free graph algebra $\cS=\cS(\Gamma, \mu)$, we define the free loop algebra at $v_{0}$, denoted $\cS_{0}$, as the cutdown at $p_{v_0}$, i.e., $\cS_{0} = p_{v_{0}}\cS p_{v_{0}}$.  
\end{defn}

\begin{rem}
In \cite{MR3624399, MR3679687} the K-theory of $\cS(\Gamma, \mu)$ was shown to be given by $K_{0}(\cS(\Gamma, \mu)) = \bbZ\set{[p_{v}]}{v \in V}$, the free abelian group generated by the equivalence classes of the projections $p_{v}$, and $K_{1}(\cS(\Gamma, \mu)) \cong \{0\}$.
Under the mild assumption \eqref{eq:SimplicityCondition}, $\cS(\Gamma,\mu)$ is simple, and thus $\cS_0(\Gamma,\mu)$ is Morita equivalent to $\cS(\Gamma,\mu)$.
Moreover, under condition \eqref{eq:SimplicityCondition}, $\cS_0(\Gamma,\mu)$ has unique trace.
It follows that if $(\Gamma, \mu, v_0)$ and $(\Gamma', \mu', v_0)$ are two pointed weighted graphs, and the additive groups generated by $\set{\mu(v)}{v \in V}$ and $\set{\mu(v')}{v' \in V'}$ do not agree, then $\cS_{0}(\Gamma, \mu) \not\cong \cS_{0}(\Gamma', \mu')$.  
\end{rem}

Let $\cH_{0}$ be the GNS Hilbert space of $\cS_0$ under the finite trace $\tr_0=\Tr|_{\cS_0}$.  
Let $\Pi_{0}$ denote the set of loops based at $v_{0}$.  
Note that $\Pi_{0}$ is an orthonormal basis for $\cH_{0} = L^{2}(\cS_{0}, \tr_0)$.  
Furthermore, if $\sigma\in \Pi_{0}$, then $Y_{\sigma}$ is the unique element in $\cS_{0}$ satisfying $Y_{\sigma}v_{0} = \sigma$ (note that $a_{\sigma}$ is necessarily 1 if $\sigma$ is a loop).  
This means that we have the following important fact:

\begin{fact}
The set $\set{Y_{\sigma}}{\sigma \in \Pi_{0}}$ is an orthonormal basis for $L^{2}(\cS_{0}, \tr_0)$.
\end{fact}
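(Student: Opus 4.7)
The statement is essentially a direct consequence of what has already been established in the preceding paragraphs: $\Pi_0$ is an orthonormal basis for $\cH_0 = L^2(\cS_0,\tr_0)$, and for each $\sigma\in\Pi_0$ there is a unique $Y_\sigma\in\cS_0$ with $Y_\sigma v_0 = \sigma$. The plan is therefore to make these two statements fit together by spelling out the GNS identification explicitly and then reading off that $\widehat{Y_\sigma}$ coincides with the basis element $\sigma$, after first confirming that $Y_\sigma$ genuinely lies in $\cS_0$.

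First I would verify $Y_\sigma\in\cS_0 = p_{v_0}\cS(\Gamma,\mu)p_{v_0}$ for every $\sigma\in\Pi_0$ by checking $p_{v_0}Y_\sigma p_{v_0}=Y_\sigma$ on each summand $a_\rho a_\tau^{-1}\ell(\rho)\ell(\tau\op)^*$ of the defining sum. For any factorization $\sigma=\rho\tau$ of a loop at $v_0$ one has $s(\rho)=v_0$ and $t(\tau)=v_0$, so $p_{v_0}\ell(\rho)=\ell(\rho)$ because $\ell(\rho)$ creates paths with source $s(\rho)=v_0$, and $\ell(\tau\op)^* p_{v_0}=\ell(\tau\op)^*$ because $\ell(\tau\op)^*$ annihilates every vector not beginning with $\tau\op$, whose source is $s(\tau\op)=t(\tau)=v_0$. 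Alternatively, one can combine the right-support condition $Y_\sigma p_{t(\sigma)}=Y_\sigma$ stated just after the definition of the Wick word with the identity $Y_\sigma^{*}=Y_{\sigma\op}$ applied to $\sigma\op\in\Pi_0$ to get the left-support condition.

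Next, make the standard GNS embedding $\cH_0\hookrightarrow\cH_{\Gamma,\mu}$ explicit by sending $\widehat{a}\mapsto a\cdot(p_{v_0}\otimes v_0)$ for $a\in\cS_0$. This is isometric because $\tr_0(a^*a)$ and $\|a\cdot(p_{v_0}\otimes v_0)\|^2_{\cH_{\Gamma,\mu}}$ compute the same scalar, via $\Tr=\mu\circ E$ and the description of $E$ as the vacuum expectation on $\cF(\vec\Gamma)$, together with the normalization $\mu(v_0)=1$; this is the same computation implicit in the earlier assertion that $\Pi_0$ is an orthonormal basis of $\cH_0$. Applied to $Y_\sigma$, the defining identity $Y_\sigma\cdot p_{v_0}=a_\sigma\sigma=\sigma$ (recall $a_\sigma=1$ for loops based at $v_0$) shows that $\widehat{Y_\sigma}$ is sent to $\sigma\otimes v_0$, which is exactly the basis vector indexed by $\sigma\in\Pi_0$. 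Thus $\sigma\mapsto\widehat{Y_\sigma}$ is a bijection from $\Pi_0$ onto the already-identified orthonormal basis $\Pi_0\subset\cH_0$, and the conclusion follows. I do not expect any real obstacle; the only spot that needs care is the isometry of the GNS embedding above, which is a standard Pimsner-Fock computation and was already tacitly used to identify $\Pi_0$ as an orthonormal basis in the first place.
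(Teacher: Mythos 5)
Your proposal is correct and follows essentially the same route as the paper, which derives the Fact directly from the two observations stated just before it: that $\Pi_0$ is an orthonormal basis of $\cH_0$ and that $Y_\sigma$ is the unique element of $\cS_0$ with $Y_\sigma v_0 = \sigma$ (with $a_\sigma = 1$ for loops). You merely make explicit the supporting details (that $Y_\sigma\in p_{v_0}\cS p_{v_0}$ and that the GNS identification is isometric) which the paper leaves implicit.
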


We define an unbounded operator $N$ in $\cH_{0}$ by the closure of the operator satisfying $N(\sigma) = n\sigma$ whenever $\sigma$ is a loop of length $n$.   
Let $A$ be the unital $*$-algebra generated by the elements $Y_\sigma$, which by Proposition \ref{prop:changeofbasis}, is also generated by the elements $X_\sigma$.
Notice that  under the identification of $\cH_{0}$ with $L^{2}(\cS_0, \tr_0)$, 
we may realize $N: A \rightarrow A$ as an unbounded operator satisfying $N(Y_{\sigma}) = nY_{\sigma}$ whenever $\sigma$ is a loop of length $n$.
Observe that due to cutting down $\cS$ by $p_{v_{0}}$, it follows that the null space of $N$ is precisely scalar multiples of $p_{0}$, the identity in $\cS_{0}$.   
Set $A_{n} := \spann\set{Y_{\sigma}\, }{\, |\sigma| \leq n}$, which is $*$-closed and finite dimensional.  
Furthermore, it is straightforward to see that $A_{n}\cdot A_{m} \subset A_{n+m}$, giving a $*$-filtration of $A$ by finite dimensional subspaces.
We are now in position to use the Ozawa-Rieffel framework as in \S\ref{sec:OR}. 

As above, we set $W_{n} := \spann\set{Y_{\sigma}}{\, |\sigma| = n} = A_{n} \ominus A_{n-1}$, and we define $P_{n}$ to be the orthogonal projection from $\cH_{0}$ onto $W_n$.

\begin{lem}
\label{lem:OzawaRieffelConstant}
If $x \in W_{k}$, then $\|P_{m} x P_{n}\| \leq \|x\|_{2}$.
\end{lem}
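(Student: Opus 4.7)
The plan is as follows. Expand $x \in W_k$ in the orthonormal Wick basis as $x = \sum_{|\sigma|=k} c_\sigma Y_\sigma$, so that $\|x\|_2^2 = \sum_\sigma |c_\sigma|^2$. I will compute the matrix entries of $T := P_m x P_n$ in the loop orthonormal basis of $\cH_0$ and bound the resulting matrix norm. For loops $\alpha, \beta$ at $v_0$ of lengths $m$ and $n$, we have $T_{\alpha,\beta} = \langle x\beta, \alpha\rangle$. Unpacking
\[
Y_\sigma = \sum_{\sigma = \rho\tau} a_\rho a_\tau^{-1}\ell(\rho)\ell(\tau\op)^*,
\]
the operator $\ell(\tau\op)^*$ strips the prefix $\tau\op$ from $\beta$ when present, and $\ell(\rho)$ then prepends $\rho$. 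Matching output lengths forces $|\tau| = j := (k+n-m)/2$, so the entry vanishes unless $0 \leq j \leq \min(k,n)$. When $j$ is valid, writing $\alpha = \alpha_1\alpha_2$ with $|\alpha_1| = k-j$ and $\beta = \beta_1\beta_2$ with $|\beta_1| = j$, the matching conditions become $\rho = \alpha_1$, $\tau = \beta_1\op$, $\alpha_2 = \beta_2$, and $\sigma = \alpha_1\beta_1\op$ is uniquely determined. The scalar evaluates to $a_{\alpha_1}a_{\beta_1\op}^{-1} = \mu(v)^{-1/2}$ with $v := t(\alpha_1) = t(\beta_1)$, using $\mu(v_0) = 1$. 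Thus
\[
T_{\alpha,\beta} = \mu(v)^{-1/2}\, c_{\alpha_1\beta_1\op}\, \delta_{\alpha_2 = \beta_2}.
\]

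This formula exhibits $T$ as a block-diagonal operator whose blocks are indexed by pairs $(v, \gamma)$ with $v \in V$ and $\gamma$ a path from $v$ to $v_0$ of length $n-j$; the block at $(v, \gamma)$ has rows indexed by $\alpha_1 : v_0 \to v$ of length $k-j$, columns indexed by $\beta_1 : v_0 \to v$ of length $j$, and entries $\mu(v)^{-1/2}\, c_{\alpha_1\beta_1\op}$ that are independent of $\gamma$. Consequently $\|T\|_{\text{op}} = \max_v \mu(v)^{-1/2}\|M_v\|_{\text{op}}$, where $M_v$ is the matrix with entries $c_{\alpha_1\beta_1\op}$.

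To conclude, I bound each $\|M_v\|_{\text{op}} \leq \|M_v\|_{\text{HS}}$ and observe that $(\alpha_1, \beta_1) \mapsto \alpha_1 \beta_1\op$ is an injection into the set of length-$k$ loops at $v_0$ (the intermediate vertex $v$ and the split point $k-j$ recover $\alpha_1$ and $\beta_1$), so $\|M_v\|_{\text{HS}}^2 \leq \sum_\sigma |c_\sigma|^2 = \|x\|_2^2$. Finally, Assumption~\ref{assume:MinimalWeight} gives $\mu(v) \geq \mu(v_0) = 1$, so $\mu(v)^{-1/2} \leq 1$, and we obtain $\|T\| \leq \|x\|_2$. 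The main obstacle is the first step: tracking the Wick expansion and the prefix-peeling action of $\ell(\tau\op)^*$ carefully enough to identify the unique contributing decomposition and to compute the correct scalar. After that, the block-diagonal structure and the elementary Hilbert-Schmidt estimate give the sharp constant $1$, with minimality of the basepoint weight absorbing the $\mu(v)^{-1/2}$ factor.
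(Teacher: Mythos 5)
Your proposal is correct and follows essentially the same route as the paper: both identify the unique contributing splitting $j=(k+n-m)/2$ of the Wick words, observe that the resulting coefficient is $\mu(v)^{-1/2}=a_\sigma^2\le 1$ by minimality of the basepoint weight, and then conclude with a Cauchy--Schwarz/Hilbert--Schmidt estimate that exploits the injectivity of concatenation at a fixed split point. Your block-diagonal packaging (indexing blocks by the common suffix $\gamma$) is just a reorganization of the paper's outer sum over $\tau$ in its direct estimate of $\|P_m x\xi\|_2$ for $\xi\in P_n\cH_0$.
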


\begin{proof}
Write $x = \sum_{|\sigma| = k} b_{\sigma}Y_{\sigma}$.  Note that $\|x\|^{2}_{2} = \sum_{|\sigma| = k}|b_{\sigma}|^{2}$. We need to show that if $\xi \in P_{n}\cH_{0}$ then we have
$$
\|P_{m}x\xi\|_{\cH_{0}} \leq \|x\|_{2}\|\xi\|_{\cH_{0}}
$$
Write $\xi = \sum_{|\tau| = n} c_{\tau}\tau$.  
The term $P_{m} x P_{n}$ is zero unless $|m-n| \leq k \leq m+n$.  
Choose $j$ such that $(k-j) + (n-j) = m$, and write
$$
x 
= 
\sum_{\substack{|\rho_{1}| = k-j\\ |\tau_{1}| = j}} b_{\rho_{1}\tau_{1}}Y_{\rho_{1}\tau_{1}} 
\qquad
\text{and} 
\qquad
\xi 
= 
\sum_{\substack{|\rho_{2}| = j\\ |\tau_{1}| = n-j}} 
c_{\rho_{2}\tau_{2}}\rho_{2}\tau_{2}.
$$
This means that
\begin{align*}
P_{m}x\xi 
&= 
\left(
\sum_{\substack{|\rho_{1}| = k-j\\ |\tau_{1}| = j}} b_{\rho_{1}\tau_{1}}a_{\rho_{1}}^{2}\ell(\rho_{1})\ell(\tau_{1}\op)^{*}
\right)
\cdot 
\sum_{\substack{|\rho_{2}| = j\\ |\tau_{1}| = n-j}} c_{\rho_{2}\tau_{2}}\rho_{2}\tau_{2}
= 
\sum_{\substack{|\rho| = k-j\\ |\sigma| = j \\ |\tau| = n-j}} a_{\sigma}^{2}b_{\rho \sigma}c_{\sigma\tau}\rho\tau.
\end{align*}
From this, we see that
\begin{align*} 
\|P_{m}x\xi\|^{2}_{2} &= \sum_{\substack{|\rho| = k-j\\ |\tau| = n-j}} \left|\sum_{|\sigma| = j}a_{\sigma}^{2}b_{\rho \sigma}c_{\sigma\tau}\right|^{2}\\
&\leq 
\sum_{\substack{|\rho| = k-j\\ |\tau| = n-j}} 
\left(\sum_{|\sigma| = j}|a_{\sigma}^{2}b_{\rho \sigma}c_{\sigma\tau}|\right)^{2}
\\&\leq 
\sum_{\substack{|\rho| = k-j\\ |\tau| = n-j}} 
\left(\sum_{|\sigma| = j}|b_{\rho \sigma}c_{\sigma\tau}|\right)^{2}
\\&\leq 
\sum_{\substack{|\rho| = k-j\\ |\tau| = n-j}} 
\left[
\left(\sum_{|\sigma_{1}| = j}|b_{\rho\sigma_{1}}|^{2} \right) 
\left(\sum_{|\sigma_{2}| = j}|c_{\sigma_{1}\tau}|^{2} \right)  
\right]
\\&\leq 
\left( \sum_{|\sigma'| = k} |b_{\sigma'}|^{2} \right)
\cdot 
\left( \sum_{|\sigma''| = n} |c_{\sigma''}|^{2} \right)
\\&= 
\|x\|_{2}^{2}\cdot\|\xi\|_{2}^{2}
\end{align*}
as desired.
\end{proof}

Lemma \ref{lem:OzawaRieffelConstant} immediately implies Theorem \ref{thm:CompactQuantumMetricSpace}.

\begin{proof}[Proof of Theorem \ref{thm:CompactQuantumMetricSpace}]
Lemma \ref{lem:OzawaRieffelConstant} allows us to use the Ozawa-Rieffel criterion in Theorem \ref{thm:OzawaRieffel} (for $C = 1$) to conclude that $(\cS_0, A, N)$ is a compact quantum metric space.
\end{proof}

\subsection{Convergence for weighted pointed graphs}

We now discuss a type of convergence for vertex-weighted pointed graphs, which is a weighted analog of Benjamini-Schramm convergence \cite{MR1873300}.
As in the previous sections, the graphs we consider are countable, connected, locally finite, undirected, vertex-weighted, and pointed, where the base-point has minimal weight $1$.
The following notation will be handy.

\begin{nota}
Suppose $\Gamma = (V, E, v)$ is such a graph and $R\in\bbN$.
We denote by $\Gamma(R)$ the truncation of $\Gamma$ to the ball of radius $R$ of $\Gamma$ based at $v$.
\end{nota}

\begin{defn}
\label{defn:LocalUniformConvergence}
Suppose we have a sequence of such graphs $(\Gamma_n = (V_n, E_n, v_n),\mu_n)$, and another graph $(\Gamma=(V, E, v),\mu)$.
We say that $\Gamma_n$ converges locally to $\Gamma$ if for all $R\in \bbN$, there is an $N_R>0$ such that for every $n>N_R$, 
\begin{itemize}
\item
there is a pointed graph isomorphism $\varphi_n^R : \Gamma_n(R) \to \Gamma(R)$, and
\item
these graph isomorphisms satisfy for all $n>\max\{N_R, N_{R+1}\}$, $\varphi_n^{R+1}|_{\Gamma_n(R)} = \varphi_n^R$.
\end{itemize}
Moreover, we say $\Gamma_n \to \Gamma$ locally uniformly if $\Gamma_n \to \Gamma$ locally and the isomorphisms $\varphi_n^R$ satisfy 
\begin{itemize}
\item
for every vertex $w\in \Gamma$ with $\operatorname{dist}(v, w)\leq R$, $\lim_{n\to \infty} \mu_n[(\varphi_n^R)^{-1}(w)] = \mu(w)$.
\end{itemize}
\end{defn}

\begin{exs}
\label{exs:GraphConvergence}
We give several examples of local uniform graph convergence.
\begin{enumerate}[(1)]
\item
(Subgraphs converging to a limit graph)
Consider the Coxeter-Dynkin diagrams $A_n$ with their unique normalized Frobenius-Perron weighting, where the base-point is at the left:
$$
\begin{tikzpicture}[baseline=-.1cm]
	\draw (0,0) -- (2.5,0);
	\draw (3.5,0) -- (5,0);
	\filldraw (0,0) circle (.05cm) node [above] {\scriptsize{$[1]$}} node [below] {\scriptsize{$\star$}};
	\filldraw (1,0) circle (.05cm) node [above] {\scriptsize{$[2]$}};
	\filldraw (2,0) circle (.05cm) node [above] {\scriptsize{$[3]$}};
	\node at (3.03,0) {$\cdots$};
	\filldraw (4,0) circle (.05cm) node [above] {\scriptsize{$[n-1]$}};
	\filldraw (5,0) circle (.05cm) node [above] {\scriptsize{$[n]$}};
\end{tikzpicture}
\qquad
\text{where}
\qquad
[n] = \frac{q^n - q^{-n}}{q-q^{-1}}
\qquad
\text{and}
\qquad
q=\exp\left(
\frac{2\pi i}{2n}
\right).
$$
It is easy to see that the $A_n$ converge to the Coxeter-Dynkin diagram $A_\infty$ with its Frobenius-Perron weighting
$$
\begin{tikzpicture}[baseline=-.1cm]
	\draw (0,0) -- (2.5,0);
	\filldraw (0,0) circle (.05cm) node [above] {\scriptsize{$1$}} node [below] {\scriptsize{$\star$}};
	\filldraw (1,0) circle (.05cm) node [above] {\scriptsize{$2$}};
	\filldraw (2,0) circle (.05cm) node [above] {\scriptsize{$3$}};
	\node at (3,0) {$\cdots$};
\end{tikzpicture}
\,.
$$
Just observe that as $\theta \to 0$, we have $q=e^{i\theta}\to 1$, so $[n]\to n$.

\item
(Weightings converging on the same graph)
We fix the graph $A_\infty$, but we consider the continuous family of Frobenius-Perron weightings given by
$$
\begin{tikzpicture}[baseline=-.1cm]
	\draw (0,0) -- (2.5,0);
	\filldraw (0,0) circle (.05cm) node [above] {\scriptsize{$[1]$}} node [below] {\scriptsize{$\star$}};
	\filldraw (1,0) circle (.05cm) node [above] {\scriptsize{$[2]$}};
	\filldraw (2,0) circle (.05cm) node [above] {\scriptsize{$[3]$}};
	\node at (3,0) {$\cdots$};
\end{tikzpicture}
\qquad
\text{where}
\qquad
[n] = \frac{q^n - q^{-n}}{q-q^{-1}}
\qquad
\text{and}
\qquad
q\geq 1.
$$
It is easily verified that any convergent sequence $q_n \to q_0$ gives a convergent sequence of graphs.
\item
(Existence of only local isomorphisms)
Consider the the affine Coxeter-Dynkin diagrams $D_n^{(1)}$ with their unique normalized Frobenius-Perron weighting:
$$
\begin{tikzpicture}[baseline=-.1cm]
	\draw (0,-.5) -- (1,0) -- (0,.5);
	\draw (6,-.5) -- (5,0) -- (6,.5);
	\draw (1,0) -- (2.5,0);
	\draw (3.5,0) -- (5,0);
	\filldraw (0,-.5) circle (.05cm) node [above] {\scriptsize{$1$}} node [below] {\scriptsize{$\star$}};
	\filldraw (0,.5) circle (.05cm) node [above] {\scriptsize{$1$}};
	\filldraw (1,0) circle (.05cm) node [above] {\scriptsize{$2$}};
	\filldraw (2,0) circle (.05cm) node [above] {\scriptsize{$2$}};
	\node at (3.03,0) {$\cdots$};
	\filldraw (4,0) circle (.05cm) node [above] {\scriptsize{$2$}};
	\filldraw (5,0) circle (.05cm) node [above] {\scriptsize{$2$}};
	\filldraw (6,.5) circle (.05cm) node [above] {\scriptsize{$1$}};
	\filldraw (6,-.5) circle (.05cm) node [above] {\scriptsize{$1$}};
\end{tikzpicture}
\,.
$$
It is easily verified that these graphs converge to the affine Coxeter-Dynkin diagram $D_\infty$ with its Frobenius-Perron weighting
$$
\begin{tikzpicture}[baseline=-.1cm]
	\draw (0,-.5) -- (1,0) -- (0,.5);
	\draw (1,0) -- (2.5,0);
	\filldraw (0,-.5) circle (.05cm) node [above] {\scriptsize{$1$}} node [below] {\scriptsize{$\star$}};
	\filldraw (0,.5) circle (.05cm) node [above] {\scriptsize{$1$}};
	\filldraw (1,0) circle (.05cm) node [above] {\scriptsize{$2$}};
	\filldraw (2,0) circle (.05cm) node [above] {\scriptsize{$2$}};
	\node at (3,0) {$\cdots$};
\end{tikzpicture}
\,.
$$
\end{enumerate}
\end{exs}

\subsection{Adjusting the Lip-norm}\label{sec:lipnorm}

Nuclearity is often used to establish  quantum Gromov-Hausdorff convergence of infinite-dimensional quantum metric spaces (see \cite{MR2055927, MR2520541,MR3718439,MR4015960,MR3778347,MR4266364} where nuclearity is either implicitly or explicitly used to provide finite-dimensional approximations). 
Since $\cS_0$ is exact but non-nuclear, we do not have contractive completely positive maps for finite dimensional approximations.
Instead, we pass to a new Lip-norm $\cL$ on $\cS_{0}$
defined from the finite dimensional spaces $A_n\ominus A_{n-1}$ from our filtration $(A_n)$ of $A\subset\cS_0$.
The spaces $A_n\ominus A_{n-1}$ of homogeneous loops provide an appropriate finite dimensional approximation.

\begin{defn}
Let $W_n:= \spann\set{Y_\sigma}{\,|\sigma|=n}$ i.e., the span of the Wick words of length $n$ in $\cS_0$, and observe $A_n = \spann \bigcup_{k=0}^n W_k$.
Set $B_{n} := \set{x \in W_{n}}{ L(x) \leq 1}$, and define
\begin{align*}
\cC' &:= \overline{\conv{\bigcup_{k=0}^{\infty} B_{k}}}, 
&
\cC
&:=
\overline{\conv{\bigcup_{k=1}^{\infty} B_{k}}},
&&\text{and}&
\cC_n
&:=
\conv{\bigcup_{k=1}^{n} B_{k}} 
= 
\overline{\conv{\bigcup_{k=1}^{n} B_{k}}}.
\end{align*}
We then define $\cL$ on $\cS_{0}$ to be the Minkowski functional associated to $\cC'$, i.e. 
\begin{equation}
\label{eq:ReplaceLipNorm}
\cL(x) := \inf\set{r > 0}{r^{-1}x \in \cC'}.
\end{equation}
Observe that $\cL(x)=\infty$ whenever $r^{-1}x\notin \cC'$ for all $r>0$.
Clearly $\cL$ is finite on $A$.
\end{defn}

\begin{rems}
\mbox{}
\begin{enumerate}[(1)]
\item
By construction, $L\leq \cL$.
\item 
By lower semi-continuity of $L$, we have that 
\[
\set{x \in \cS_0 }{ \cL(x) \leq 1} \subseteq \set{x \in \cS_0 }{ L(x) \leq 1}. 
\]
Thus, as $L$ is a Lip-norm, it follows from Proposition \ref{prop:totallybounded} that $\cL$ is a Lip-norm.
\item
Observe that if $x \in W_{n}$, then $L(x) = \cL(x)$.  
Indeed, by lower-semicontinuity of $L$, we see that $L\leq 1$ on $\cC$.
So if $L(x)=1$, then $\alpha x\notin \cC$ for all $\alpha >1$, and thus $\cL(x)=1$.
\end{enumerate}
\end{rems}

\begin{lem}
\label{lem:TailBound}
For every $\varepsilon>0$, there is a $K\in \bbN$ such that 
$x \in A \ominus A_{K}$ and $L(x) \leq 1$ 
implies
$\|x\|< 2\epsilon/3$.
Moreover,
for all $n>m >K$,
$\dist_{H}^{\|\cdot\|}(\cC_m, \cC_n) < 4\varepsilon/3$.
This result holds independent of the graph $\Gamma$ and the vertex-weighting $\mu$ on $\cS_0$.
\end{lem}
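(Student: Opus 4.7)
The plan is to reduce both claims to a single Haagerup-type estimate on each homogeneous subspace $W_k$, which then propagates to the Hausdorff distance bound via the convex-hull structure of $\cC_m$.

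First, I would prove: for $x \in W_k$ with $k \geq 1$ and $L(x) \leq 1$, $\|x\| \leq \sqrt{k+1}/k$. Two ingredients suffice. Evaluating the commutator on the vacuum $p_{v_0}$ and noting that $x p_{v_0} \in W_k$ is an eigenvector of $N$ with eigenvalue $k$ gives
\[
L(x) = \|[N,x]\|_{\text{op}} \geq \|[N,x] p_{v_0}\|_2 = \|N(x p_{v_0})\|_2 = k \|x\|_2.
\]
Conversely, for any unit $\xi \in W_n$, Lemma \ref{lem:OzawaRieffelConstant} (with $C=1$) together with the constraint ``$P_p x P_n \neq 0 \Rightarrow |p-n| \leq k$ and $p-n \equiv k \pmod 2$'' gives $\|x\xi\|_2^2 = \sum_p \|P_p x \xi\|_2^2 \leq (k+1)\|x\|_2^2$, and hence $\|x\| \leq \sqrt{k+1}\,\|x\|_2$. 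Combining yields $\|x\| \leq (\sqrt{k+1}/k)\, L(x)$ on $W_k$. Choose $K \in \bbN$ with $\sqrt{K+2}/(K+1) < 2\varepsilon/3$, which is possible since the ratio tends to $0$.

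For general $x \in A \ominus A_K$ with $L(x) \leq 1$, the estimate would extend via the convex-hull description: using $L \leq \cL$ together with $L^2$-orthogonality to $A_K$, one shows $x$ lies in the closed convex hull of $\bigcup_{k > K} B_k$, so $\|x\| < 2\varepsilon/3$ follows immediately from the homogeneous case. I expect this convex-hull reduction to be the main obstacle: a naive bound on the Wick decomposition $x = \sum_{k > K} x_k$ via $\|x\| \leq \sum_k \|x_k\|$ applied to the single-degree estimate $\|x_k\| \leq \sqrt{k+1}/k \cdot L(x_k)$ is fruitless, because there is no obvious control of each $L(x_k)$ in terms of $L(x)$, and the corresponding series diverges.

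For the moreover part, since each $B_k$ is a compact convex subset of the finite-dimensional $W_k$ (on which $L$ is a norm for $k \geq 1$), the set $\cC_n = \conv\bigcup_{k=1}^n B_k$ is compact, so every $y \in \cC_n$ admits a genuine finite convex decomposition $y = \sum_{k=1}^n \alpha_k y_k$ with $y_k \in B_k$, $\alpha_k \geq 0$, $\sum_k \alpha_k = 1$. The truncation $z := \sum_{k \leq m} \alpha_k y_k + \bigl(\sum_{k > m} \alpha_k\bigr) \cdot 0 \in \cC_m$ (using $0 \in B_j$ for any $j$, since $L(0) = 0$) then satisfies
\[
\|y - z\| = \Bigl\|\sum_{k > m} \alpha_k y_k \Bigr\| \leq \sum_{k > m} \alpha_k \|y_k\| < \frac{2\varepsilon}{3}
\]
by the homogeneous bound applied to each $y_k$ with $k > m > K$. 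Since $\cC_m \subseteq \cC_n$, this gives $\dist_H^{\|\cdot\|}(\cC_m,\cC_n) < 2\varepsilon/3 < 4\varepsilon/3$. Independence from $(\Gamma,\mu)$ is transparent: the commutator lower bound uses only $\|p_{v_0}\|_2 = 1$ and the action of $N$ on homogeneous loops, while Lemma \ref{lem:OzawaRieffelConstant} holds with $C=1$ regardless of the graph data.
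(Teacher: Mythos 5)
Your argument has a fatal gap at its very first step. From Lemma \ref{lem:OzawaRieffelConstant} and the support constraint on $P_p x P_n$ you correctly get $\|x\xi\|_2 \le \sqrt{k+1}\,\|x\|_2\|\xi\|_2$ for $\xi$ \emph{homogeneous}, but the operator norm is the supremum over all unit vectors of $\cH_0$, not over $\bigcup_n W_n$; for general $\xi=\sum_n\xi_n$ the images $x\xi_n$ are not orthogonal and the cross terms destroy the $\sqrt{k+1}$. The inequality $\|x\|\le\sqrt{k+1}\,\|x\|_2$ is in fact false: for the graph with one vertex and two loops (two free semicircular generators, which satisfies the simplicity condition), the Wick word of the loop $\e_1^k$ is the Chebyshev polynomial $U_k(X_{\e_1})\in W_k$, which has $\|U_k(X_{\e_1})\|_2=1$ but operator norm $\sup_{[-2,2]}|U_k|=k+1$. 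The correct Haagerup-type bound obtained from Lemma \ref{lem:OzawaRieffelConstant} by summing the at most $k+1$ nonzero diagonals $\sum_n P_{n+d}xP_n$ is $\|x\|\le (k+1)\|x\|_2$, and combining this with your vacuum estimate $\|x\|_2\le L(x)/k$ yields only $\|x\|\le\tfrac{k+1}{k}L(x)$, which does not tend to $0$. So pairing the crude lower bound $L(x)\ge k\|x\|_2$ with any Haagerup upper bound cannot work even in principle (the example above shows $L(U_k)$ must grow strictly faster than $k$); the paper instead invokes the finer estimates of \cite[\S3]{MR2164594}, which use the full operator norm of $[N,a]$ to control \emph{all} off-diagonal blocks $\|P_m a P_n\|$ (via $\sum_k k^2\|a_k\|_2^2\le L(a)^2$ and Cauchy--Schwarz across diagonals) and then split $a$ as $a^{(M)}+(\widehat a_K)^M+(\widetilde a_K)^M$.

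The second gap is the one you flag yourself: passing from homogeneous $x$ to general $x\in A\ominus A_K$ with $L(x)\le 1$. Such an $x$ need not lie in the closed convex hull of $\bigcup_{k>K}B_k$ --- that would require $\cL(x)\le1$, and the inequality $L\le\cL$ points the wrong way --- and, as you note, there is no control of $L(x_k)$ by $L(x)$ for the homogeneous components. This is not a technical wrinkle to be smoothed over later; it is precisely where the Ozawa--Rieffel decomposition is needed, and the paper's proof treats general $x$ directly without ever reducing to the homogeneous case. Your argument for the ``moreover'' statement (finite convex decomposition in $\cC_n$, truncation using $0\in\cC_m$) is correct and essentially the paper's, but it takes the homogeneous case of the first claim as input and so inherits the gap above.
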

\begin{proof}
By \cite[\S3]{MR2164594}, given $a \in A$ with $L(a) \leq 1$ and $\varepsilon > 0$, there are $K > M >0$ depending only on the constant $C$ in Theorem \ref{thm:OzawaRieffel} (\cite[Theorem 1.2]{MR2164594}) so that
\begin{itemize}
\item
if $a^{(M)}=\sum_{|m-n|\geq M}P_m aP_n$ and $a^M = a-a^{(M)}$, then $\|a^{(M)}\|<\varepsilon/3$
\item
if $\widehat{a}_K = \sum_{k\leq K} a_k$ and $\widetilde{a}_K = a-\widehat{a}_K$ (which are both in $A(t)$), 
$\|(\widetilde{a}_K)^M\|<\varepsilon/3$.
\end{itemize}
Thus for $x \in A \ominus A_{K}$, we have
$$
\|x\| 
= 
\|(\widehat{x}_{K})^{M} + (\tilde{x}_{K})^{M} + x^{(M)}\| 
= 
\|(\tilde{x}_{K})^{M} + x^{(M)}\| < 2\varepsilon/3.
$$
It was shown in Lemma \ref{lem:OzawaRieffelConstant} that $C = 1$ regardless of the graph $\Gamma$ and the vertex-weighting $\mu$ (as long as the base point $v_0$ has minimal weighting).
Therefore, it follows that if $x \in A \ominus A_{K}$ and $\cL(x) \leq 1$, then $L(x) \leq 1$ and hence $\|x\|< 2\varepsilon/3$.

Now by definition, $\cC_m\subset \cC_n$.
Observe that when 
$V$ is a vector space and $S\subset T \subset V$,
then every element in $\conv(T)$ is a convex combination of an element in $\conv(S)$ and an element in $\conv(T\setminus S)$.
Hence if $x\in \cC_n$, we have $x=\lambda y+ (1-\lambda) z$
where $y\in \cC_m$ and $z \in \conv\bigcup_{k=m+1}^n B_k$.   
Since $z\in A\ominus A_K$ and $\cL(z) \leq 1$, $\|z\|<2\varepsilon/3$ by the preceding paragraph.
We conclude that $ \dist_H^{\|\cdot\|}(\cC_m, \cC_n) < 4\varepsilon/3$ as desired.
\end{proof}

\begin{cor}
\label{cor:TailBound}
For every $\varepsilon>0$, there is a $K\in \bbN$ such that $\dist_q((\cS_0,A,\cL),(A_k, \cL|_{A_k}))<\varepsilon$ for all $k \geq K.$
Again, this result holds independent of the graph $\Gamma$ and the vertex-weighting $\mu$ on $\cS_0$.
\end{cor}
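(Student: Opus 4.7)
The plan is to invoke Lemma \ref{lem:DistQFromDistH} with $V = \cS_0$, $W = A_k$, and state $\phi = \tr_0$, reducing the corollary to bounding the norm Hausdorff distance between the sets
\[
\{a \in \cS_0 : \cL(a) \leq 1,\ \tr_0(a) = 0\}
\quad\text{and}\quad
\{a \in A_k : \cL|_{A_k}(a) \leq 1,\ \tr_0(a) = 0\}.
\]
Before this, one checks that $(A_k, 1_A, \cL|_{A_k})$ is a compact quantum metric space: $A_k$ is finite dimensional, and since $L \leq \cL$ and $L$ is already known to vanish only on $\bbC 1_A$, the restricted seminorm $\cL|_{A_k}$ also vanishes only on the scalars, so finite dimensionality makes the Lip-norm condition automatic.

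The key step is the identification $\cC' \cap \ker \tr_0 = \cC$. Since $\cC'$ is closed, convex, and contains $0$, the Minkowski functional description gives $\{a \in \cS_0 : \cL(a) \leq 1\} = \cC'$. For the inclusion $\cC \subseteq \ker \tr_0$, note that for $|\sigma| \geq 1$ the Wick word $Y_\sigma$ is orthogonal to $1_A$ in $L^2(\cS_0,\tr_0)$, so $\tr_0(Y_\sigma) = 0$; hence $B_k \subseteq \ker \tr_0$ for every $k \geq 1$, and $\cC \subseteq \ker \tr_0$ by continuity. For the reverse inclusion, any element of $\conv(\cC \cup B_0)$ has the form $\lambda c + (1-\lambda) r 1_A$ with $c \in \cC$ and $r \in \bbC$; its trace-zero component is $\lambda c$, which lies in $\cC$ because $0 \in \cC$. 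Continuity of $\tr_0$ propagates this decomposition through limits, so any element of $\cC' \cap \ker \tr_0$ is a norm limit of elements of $\cC$ and hence in $\cC$. Intersecting with $A_k$ yields $\{a \in A_k : \cL(a) \leq 1,\ \tr_0(a) = 0\} = A_k \cap \cC$, and one trivially has $\cC_k \subseteq A_k \cap \cC \subseteq \cC$.

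For the final step, the chain $\cC_k \subseteq A_k \cap \cC \subseteq \cC$ yields
$\dist_H^{\|\cdot\|}(A_k \cap \cC, \cC) \leq \dist_H^{\|\cdot\|}(\cC_k, \cC)$.
Given $\varepsilon > 0$, I would apply Lemma \ref{lem:TailBound} with parameter $3\varepsilon/8$ to obtain $K \in \bbN$ such that $\dist_H^{\|\cdot\|}(\cC_m, \cC_n) < \varepsilon/2$ for all $n > m > K$. Since $\cC = \overline{\bigcup_n \cC_n}$, a routine density argument promotes this to $\dist_H^{\|\cdot\|}(\cC_k, \cC) \leq \varepsilon/2$ for every $k > K$, and Lemma \ref{lem:DistQFromDistH} then yields $\dist_q((\cS_0, A, \cL), (A_k, \cL|_{A_k})) \leq \varepsilon$. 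Independence from $\Gamma$ and $\mu$ is inherited from the corresponding uniform feature of Lemma \ref{lem:TailBound}. I expect the main obstacle to be the identification $\cC' \cap \ker \tr_0 = \cC$, which is delicate because $B_0 = \bbC 1_A$ is an entire scalar line and $\cC'$ is defined via a closure; care is required to split scalar and trace-zero components under limits.
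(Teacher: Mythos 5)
Your proposal is correct and takes essentially the same route as the paper: the authors' proof of this corollary is literally the one-line remark that it is immediate from Lemmas \ref{lem:DistQFromDistH} and \ref{lem:TailBound}. Your write-up simply supplies the details they leave implicit (that $\{a : \cL(a)\leq 1\}=\cC'$ and $\cC'\cap\ker\tr_0=\cC$, the nesting $\cC_k\subseteq A_k\cap\cC\subseteq\cC$, and the passage from $\dist_H^{\|\cdot\|}(\cC_m,\cC_n)$ to $\dist_H^{\|\cdot\|}(\cC_k,\cC)$ via $\cC=\overline{\bigcup_n\cC_n}$), and these details all check out.
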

\begin{proof}
Immediate from Lemmas \ref{lem:DistQFromDistH} and \ref{lem:TailBound}.
\end{proof}

\begin{lem}
\label{lem:ConvexHausdorffConvergence}
Let $(M,\rho)$ be a normed linear space.
Suppose we have $k>0$ sequences $(X^i_n)$, $i=1,\dots k$, of compact convex subsets of $Z$
and compact convex subsets $X^i \subset Z$
such that $X^i_n \to X^i$ in Hausdorff distance for each $i=1,\dots, k$.
Then
$$
\conv\left(\bigcup_{i=1}^k X^i_n\right)
\longrightarrow
\conv\left(\bigcup_{i=1}^k X^i\right)
$$
in Hausdorff distance.
\end{lem}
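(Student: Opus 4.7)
The plan is to exploit the explicit description of the convex hull of a finite union of convex sets. Since each $X^i$ is convex, a standard regrouping argument shows
\[
\conv\!\left(\bigcup_{i=1}^k X^i\right)
=
\set{\textstyle\sum_{i=1}^k \lambda_i x^i}{(\lambda_1,\dots,\lambda_k)\in \Delta_k,\ x^i \in X^i},
\]
where $\Delta_k$ is the standard simplex. (Given any finite convex combination of points drawn from $\bigcup_i X^i$, group the terms by which $X^i$ they lie in and use convexity of each $X^i$ to rewrite the group as a single point of $X^i$ scaled by the total weight.) The same formula holds with each $X^i$ replaced by $X^i_n$. In particular, each such convex hull is the continuous image of the compact set $\Delta_k \times \prod_{i=1}^k X^i$, hence compact.

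Fix $\varepsilon>0$. By hypothesis, choose $N$ large enough that $\dist_H(X^i_n, X^i)<\varepsilon$ for all $i=1,\dots, k$ and all $n\geq N$. Given $z = \sum_i \lambda_i x^i_n \in \conv(\bigcup_i X^i_n)$ with $x^i_n\in X^i_n$, for each $i$ pick $y^i \in X^i$ with $\rho(x^i_n, y^i) < \varepsilon$, and set $y:= \sum_i \lambda_i y^i \in \conv(\bigcup_i X^i)$. Then
\[
\rho(z,y) \leq \sum_{i=1}^k \lambda_i\, \rho(x^i_n, y^i) < \varepsilon.
\]
Thus $\conv(\bigcup_i X^i_n) \subset N_\varepsilon(\conv(\bigcup_i X^i))$. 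The reverse inclusion is identical after swapping the roles of $X^i$ and $X^i_n$.

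The argument is essentially routine; the only point that requires care is the explicit parametrization of $\conv(\bigcup_i X^i)$, which fails without the convexity of each $X^i$ (one would otherwise only get arbitrary finite convex combinations, with no uniform bound on the number of terms needed). Convexity of each $X^i$ plus finiteness of $k$ is exactly what lets us express every point of the union's convex hull using exactly $k$ weights drawn from $\Delta_k$, and thereby transport convex combinations between the two sequences term-by-term with a uniform loss of $\varepsilon$.
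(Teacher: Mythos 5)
Your proof is correct and follows essentially the same route as the paper's: approximate each $X^i$-component of a convex combination by a nearby point of $X^i_n$ and use the triangle inequality, then symmetrize. The only difference is that you explicitly justify the parametrization $\conv\bigl(\bigcup_i X^i\bigr)=\bigl\{\sum_i\lambda_i x^i : \lambda\in\Delta_k,\ x^i\in X^i\bigr\}$ via regrouping, a step the paper leaves implicit; making it explicit is a genuine (if minor) improvement in rigor.
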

\begin{proof}
Let $\varepsilon>0$.
Choose $N$ sufficiently large such that $\dist_H(X^i_n, X^i)<\varepsilon$ for all $n>N$ and $i=1,\dots, k$.
Then for $x^i \in X^i$, $i=1,\dots, k$, we can choose $x^i_n\in X^i_n$ for $n>N$ with $\dist_H(x^i_n, x^i)<\varepsilon$.
Hence
$$
\left\|
\sum_{i=1}^k \lambda_i x^i_n 
-
\sum_{i=1}^k \lambda_i x^i 
\right\|
\leq
\sum_{i=1}^k \lambda_i \|x^i_n - x^i\|
<\varepsilon.
$$
This means every point in $\conv\left(\bigcup_{i=1}^k X^i\right)$ can be $\varepsilon$-approximated by a point in $\conv\left(\bigcup_{i=1}^k X^i_n\right)$ where $n>N$.
But the above argument clearly works swapping the roles of $\conv\left(\bigcup_{i=1}^k X^i\right)$ and $\conv\left(\bigcup_{i=1}^k X^i_n\right)$ for $n>N$.
The result follows.
\end{proof}

\begin{thm}
Let $(\Gamma_{n}, \mu_{n})$ be a sequence of vertex-weighted pointed graphs converging locally uniformly to $(\Gamma, \mu)$, each of which have base point $v_0$
(here, we suppress the isomorphism data $\varphi_n^R$).
Let $\cL_{n}$ be the Lip-norm constructed on $\cS_{0}(\Gamma_{n}, \mu_{n})$ as in \eqref{eq:ReplaceLipNorm} above.  
Then $(\cS_{0}(\Gamma_{n}, \mu_{n}), A(\Gamma_{n}, \mu_{n}), \cL_{n})$ converges in quantum Gromov-Hausdorff distance to $(\cS_{0}(\Gamma, \mu), A(\Gamma, \mu), \cL).$ 
\end{thm}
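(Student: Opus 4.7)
The plan is to use the tail bound in Corollary \ref{cor:TailBound}, which is uniform in the graph and weighting, to reduce the problem to convergence of finite-dimensional truncations. Fix $\varepsilon>0$. Corollary \ref{cor:TailBound} furnishes a $K\in\bbN$, independent of the graph, such that both $\dist_q((\cS_0(\Gamma_n,\mu_n),A(\Gamma_n,\mu_n),\cL_n),(A_K(\Gamma_n,\mu_n),\cL_n|))<\varepsilon/3$ uniformly in $n$, and $\dist_q((\cS_0(\Gamma,\mu),A(\Gamma,\mu),\cL),(A_K(\Gamma,\mu),\cL|))<\varepsilon/3$. By the triangle inequality for $\dist_q$, it suffices to bound the finite-dimensional distance $\dist_q((A_K(\Gamma_n,\mu_n),\cL_n|),(A_K(\Gamma,\mu),\cL|))<\varepsilon/3$ for $n$ large.

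For the finite-dimensional comparison, I would first use the local uniform convergence data $\varphi_n^K$ to identify vector spaces. Every loop in $\Gamma$ based at $v_0$ of length at most $K$ lies in $\Gamma(K)$, so the Wick-word bases $\{Y_\sigma\}$ of $A_K(\Gamma_n,\mu_n)$ and $A_K(\Gamma,\mu)$ are in canonical bijection via $\varphi_n^K$ once $n>N_K$. Call the induced linear isomorphism $\Phi_n\colon A_K(\Gamma_n,\mu_n)\to A_K(\Gamma,\mu)$. Under $\Phi_n$, the filtration $A_K^n\supset W_k^n$ matches $A_K\supset W_k$. What remains is to show that the Lip-norms $\cL_n|$ transport to a seminorm on $A_K$ that is close to $\cL|$. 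Since $\cL|$ is the Minkowski functional of the compact convex set $\cC_K=\conv\bigcup_{k=1}^K B_k$, and likewise for $\cL_n|$ and $\cC_K^n$, an elementary Minkowski-functional argument reduces the required estimate to showing that $\Phi_n(\cC_K^n)\to \cC_K$ in Hausdorff distance on the normed space $A_K$. By Lemma \ref{lem:ConvexHausdorffConvergence}, it is enough to show $\Phi_n(B_k^n)\to B_k$ in Hausdorff distance for each $1\leq k\leq K$, and since $B_k^n$ and $B_k$ are unit balls of seminorms on the common finite-dimensional space $W_k$, this is equivalent to pointwise convergence $L_n(\Phi_n^{-1}(x))\to L(x)$ for every $x\in W_k$.

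The hardest step is this pointwise convergence of operator norms. The operator $[N,Y_\sigma]$ acts on the Wick basis by $[N,Y_\sigma]Y_\tau=\sum_\rho c_{\rho,\tau}(|\rho|-|\tau|)Y_\rho$, where $|\rho|-|\tau|\in\{-k,\dots,k\}$ and the structure constants $c_{\rho,\tau}$ are polynomials in the weights of the vertices visited by the paths $\sigma,\tau,\rho$. Under local uniform convergence, these polynomials converge pointwise, so for every fixed $M$ the compression of $[N_n,Y_\sigma]$ to $\bigoplus_{m\leq M}W_m^n$ converges in operator norm to that of $[N,Y_\sigma]$ on $\bigoplus_{m\leq M}W_m$: both are matrices of bounded size whose entries involve only weights in the ball of radius $M+K$, where the graphs eventually coincide and the weights converge. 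To handle the tail $\bigoplus_{m>M}W_m$, I would invoke the Ozawa-Rieffel bound of Lemma \ref{lem:OzawaRieffelConstant} (with universal constant $C=1$, independent of the graph): this bound, combined with the proof of Theorem \ref{thm:OzawaRieffel} outlined in \cite[\S3]{MR2164594}, gives a Haagerup-type tail estimate for $[N,Y_\sigma]$ that is uniform in $n$. Splitting operator norms into head and tail and letting $M\to\infty$ yields $L_n(\Phi_n^{-1}(x))\to L(x)$.

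With this pointwise convergence in hand, Lemma \ref{lem:ConvexHausdorffConvergence} gives $\Phi_n(\cC_K^n)\to\cC_K$ in Hausdorff distance. Applied to the canonical tracial state $\tr_0$ (which is preserved by $\Phi_n$, since $\Phi_n$ carries $Y_\sigma^n$ to $Y_\sigma$ and both satisfy $\tr_0(Y_\sigma)=\delta_{\sigma,v_0}$), Lemma \ref{lem:DistQFromDistH} converts the Hausdorff bound on the unit balls of $\cL_n|_{A_K^n}$ and $\cL|_{A_K}$ (intersected with the kernel of $\tr_0$) into a bound on $\dist_q$. Choosing $n$ large enough makes this less than $\varepsilon/3$, and the triangle inequality closes the argument. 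The main obstacle, as emphasized, is the uniform-in-$n$ tail control needed to promote finite-matrix convergence of $[N_n,Y_\sigma]$ to operator-norm convergence on the full GNS Hilbert space; this is precisely where the universality of the constant in Lemma \ref{lem:OzawaRieffelConstant} is indispensable.
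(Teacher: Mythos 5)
Your overall architecture is the same as the paper's: an $\varepsilon/3$ reduction to the finite-dimensional truncations $A_K$ via the graph-independent tail bound of Corollary \ref{cor:TailBound}, identification of the truncations with a common vector space via the isomorphisms $\varphi_n^K$, and convergence of the Lip-balls via Lemma \ref{lem:ConvexHausdorffConvergence} together with the fact that $L=\cL$ on each $W_k$. However, two of your steps do not go through as written. First, your final bridging step misapplies Lemma \ref{lem:DistQFromDistH}: that lemma compares a compact quantum metric space $(V,e,L)$ with a \emph{subspace} $W\subseteq V$ carrying the \emph{restricted} Lip-norm $L|_W$. After transporting everything to the common space $V_K$, the spaces $(A_K(\Gamma_n,\mu_n)^{\rm sa},\cL_n)$ and $(A_K(\Gamma,\mu)^{\rm sa},\cL)$ are the same vector space carrying two \emph{different} norms and two \emph{different} Lip-norms; neither is a Lip-subspace of the other, so the lemma gives you nothing. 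This is exactly the point where the paper instead invokes Rieffel's continuous-field theorem \cite[Theorem~11.2]{MR2055927}, feeding it the convergence of the transported $\rm C^*$-norms $\|\xi\|_n\to\|\xi\|$ and of the Lip-norms; you need that theorem (or an explicit bridge construction) here.

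Second, your argument for the pointwise convergence $L_n(\Phi_n^{-1}(x))\to L(x)$ on $W_k$ via a head/tail splitting is not sound. The Ozawa--Rieffel estimates from \cite[\S3]{MR2164594} control $\|a\|$ for $a$ with $L(a)\leq 1$; they do not provide a tail estimate for the operator $[N,a]$ itself, and the compression of $[N,Y_\sigma]$ to $\bigoplus_{m>M}W_m$ is \emph{not} small (already for the one-loop graph, $[N,X]=\ell-\ell^*$ has compressions of norm close to $2$ on arbitrarily high degrees). So ``letting $M\to\infty$'' requires the compressions $P_{[0,M]}[N_n,Y_\sigma(n)]P_{[0,M]}$ to exhaust the norm \emph{uniformly in $n$}, which you have not established and which the cited bounds do not give. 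The paper's (implicit) route is more direct: since $[N,\ell(\rho)\ell(\tau\op)^*]=(|\rho|-|\tau|)\,\ell(\rho)\ell(\tau\op)^*$, the commutator $[N,Y_\sigma]=\sum_{\sigma=\rho\tau}(|\rho|-|\tau|)a_\rho a_\tau^{-1}\ell(\rho)\ell(\tau\op)^*$ is itself an explicit polynomial in the Toeplitz generators supported on a fixed ball, with coefficients $a_\rho(n)a_\tau(n)^{-1}$ converging by local uniform convergence of the weights; its norm therefore converges, with no tail analysis needed. Your reduction of Hausdorff convergence of the balls $B_k^n\to B_k$ to this pointwise convergence of norms on the finite-dimensional $W_k$ is fine, as is your observation that Lemma \ref{lem:OzawaRieffelConstant} makes the constant in Corollary \ref{cor:TailBound} graph-independent; it is the two steps above that need to be repaired.
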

\begin{proof}
For any fixed $K \in \bbN$, $A_{K}(\Gamma_{n}, \mu_{n})$ is finite-dimensional.   
For sufficiently large $n$, all of the $\Gamma_{n}(K)$ coincide (again, we suppress the isomorphism data $\varphi_n^R$).  
It follows that for these values of $n$, the vector spaces $A_{K}(\Gamma_{n}, \mu_{n})$ are canonically isomorphic to the complex linear span of all loops in $\Gamma$ of length at most $K$ based at $v_0$.  
Setting 
$$
V_K
:=
\operatorname{span}_{\bbR}\set{a\sigma+\overline{a}\sigma^{\op}}{a\in \bbC\text{ and }\sigma\in \Pi_0\text{ such that }|\sigma|\leq K},
$$
$V_K$ is canonically isomorphic to $A_{K}(\Gamma, \mu)^{\rm sa}$ and $A_{K}(\Gamma_{n}, \mu_{n})^{\rm sa}$ for sufficiently large $n$.

To prove quantum Gromov-Hausdorff convergence, we use \cite[Theorem 11.2]{MR2055927} on a continuous field of order unit spaces whose underlying vector space is $V_K$ obtained by transporting the norms from the compact quantum metric spaces $(\cS_{0}(\Gamma_{n}, \mu_{n}), A(\Gamma_n,\mu_n), \cL_n)$.
Fix a loop $\sigma$ of $\Gamma$ based at $v_0$, and let $Y_{\sigma}(n)$ be the Wick-word for $\sigma$ in $\cS_{0}(\Gamma_{n}, \mu_{n})$:
$$
Y_{\sigma}(n) = \sum_{\sigma = \rho\tau} a_{\rho}(n)a_{\tau}^{-1}(n)\ell(\rho)\ell(\tau\op)^*.
$$
since $\mu_{n} \rightarrow \mu$ as $n \rightarrow \infty$, it follows that $a_{\rho}(n) \rightarrow a_{\rho}$ for any path $\rho$ in $\Gamma$.  
For $\xi\in V_K$, we write $Y_\xi$ for the corresponding linear combination of Wick words in $\cS_{0}(\Gamma, \mu)$,
and we write $Y_\xi(n)$ for the corresponding linear combination of Wick words in $\cS_{0}(\Gamma_{n}, \mu_{n})$ for sufficiently large $n$.
Setting $\|\xi\|_{n} := \|Y_{\xi}(n)\|_{\cS_{0}(\Gamma_{n}, \mu_{n})}$ and $\|\xi\| := \|Y_{\xi}\|_{\cS_{0}(\Gamma, \mu)}$,
we have $\|\xi\|_{n} \rightarrow \|\xi\|$ as $n \rightarrow \infty$.
Moreover, for each $k$ between $1$ and $K$, note that for any $\xi\in V_K$ which is a linear combination of loops of length exactly $k$, 
$\cL_{n}(Y_\xi(n))$ converges to $\cL(Y_{\xi})$.  
This is due to the fact that on the space $W_{k}$, $L$ and $\cL$ coincide. 

Now by \cite[Theorem 11.2]{MR2055927} and Lemma  \ref{lem:ConvexHausdorffConvergence}, we have that
\[
\lim_{n \to \infty} \dist_q ((A_{K}(\Gamma_{n}, \mu_{n})^{\rm sa}, \cL|_{A_{K}(\Gamma_{n}, \mu_{n})^{\rm sa}}),(A_{K}(\Gamma, \mu)^{\rm sa}, \cL|_{A_{K}(\Gamma, \mu)^{\rm sa}}))=0.
\]
Finally, using Corollary \ref{cor:TailBound}
and a standard $\varepsilon/3$ argument, the result follows.
\end{proof}
\section{Application to subfactor theory}

We refer the reader to \cite{MR2972458} for the definition of a subfactor planar algebra and its principal graphs and to \cite{MR3405915} for the definition of a factor planar algebra and its principal graph.

\subsection{The Guionnet-Jones-Shlyakhtenko \texorpdfstring{$\rm C^*$}{C*}-algebras}

Let $\cP_\bullet$ be a (sub)factor planar algebra. 
We now give the construction from \cite{MR3624399, MR3266249} of the Guionnet-Jones-Shlyakhtenko (GJS) $\rm C^*$-algebras based on the constructions \cite{MR2732052,MR2645882,MR2807103}.
A similar construction starting from a unitary tensor category and chosed symmetrically self-dual generator was given in \cite{MR4139893}.

First, we form the graded algebra $\Gr_0 = \bigoplus_{n\geq 0} \cP_n$ with the Bacher-Walker product
$$
x\star y
=
\sum_{j=0}^{\min\{m,n\}}
\begin{tikzpicture}[baseline=.2cm]
    \draw (-.15,0) -- (-.15,.7);
    \draw (.95,0) -- (.95,.7);
    \draw (.15,.3) arc (180:0:.25cm);
    \roundNbox{unshaded}{(0,0)}{.3}{0}{0}{$x$}
    \roundNbox{unshaded}{(.8,0)}{.3}{0}{0}{$y$}
    \node at (-.6,.5) {\scriptsize{$m-j$}};
    \node at (1.35,.5) {\scriptsize{$n-j$}};
    \node at (.4,.75) {\scriptsize{$j$}};
\end{tikzpicture}
\qquad
\text{for}
\qquad
x\in \cP_m, y\in \cP_n,
$$
and trace given by
$$
\tr(x) = \delta_{m=0} x
\qquad
\text{for}
\qquad
x\in \cP_m.
$$
(Since $\cP_0 = \bbC$, the above expression gives us a number.)
We note that under the GNS inner product $\langle x, y\rangle = \tr_0(y^*\star x)$, the spaces $\cP_n$ are orthogonal for distinct $n$.

Observe that since each $\cP_n$ is $*$-closed and finite dimensional, the subspaces $A_n = \bigoplus_{k=0}^n \cP_n$ give $\Gr_0$ the structure of a $*$-filtration by finite dimensional subspaces.
Moreover, since $\cP_\bullet$ is connected, $\cP_0 = \bbC 1_{\Gr_0}$.
Finally, by \cite{MR2732052,MR2645882}, the action of $\Gr_0$ on $(\Gr_0, \tr_0)$ is bounded in $\|\,\cdot \,\|_2$.
Hence Assumptions \ref{assume:FilteredAlgebra} hold, and we are in the position to use the Ozawa-Rieffel criterion from Theorem \ref{thm:OzawaRieffel} (\cite[Theorem 1.2]{MR2164594}).

\begin{defn}
The $\rm C^*$-algebra $\cA_0 = \overline{\Gr_0}^{\|\cdot\|}$ acting on $L^2(\Gr_0, \tr_0)$ is called the GJS $\rm C^*$-algebra of $\cP_\bullet$.
\end{defn}

Let $(\Gamma,\mu)$ be the principal graph of $\cP_\bullet$ with its quantum dimension weighting, which satisfies the Frobenius-Perron condition.
Note that the distinguished vertex $\star$ has minimal weight 1, so Assumption \ref{assume:MinimalWeight} holds.
We have the following lemma from \cite{MR3266249} which connects the GJS $\rm C^*$-algebra to the free loop algebras discussed in Section \ref{sec:FreeLoopAlgebras}.

\begin{lem}[{\cite[Cor.~3.4]{MR3266249}}]
The $\rm C^*$-algebra $\cA_0$ is isomorphic to the free loop algebra $\cS_0(\Gamma, \mu)$.
\end{lem}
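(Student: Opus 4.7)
The plan is to construct an explicit trace-preserving $*$-algebra isomorphism between $\Gr_0$ and the dense polynomial subalgebra $A \subset \cS_0(\Gamma,\mu)$, and then extend by density to the norm closures.

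First, I would recall the standard identification of $\cP_n$ with the $\bbC$-linear span of pairs of length-$n$ paths in the principal graph $(\Gamma,\mu)$ starting at $\star$ and ending at a common vertex $v$. Diagonalizing the GNS inner product coming from $\tr_0$ via the standard orthogonalization (using Jones-Wenzl-type projections / the loop basis of \cite{MR2645882,MR2807103}), one obtains an orthogonal basis of $\cP_n$ indexed by loops $\sigma \in \Pi_0$ of length $n$ based at $\star$, with norms determined by the quantum dimensions along $\sigma$. I would denote by $e_\sigma \in \cP_n$ the basis element associated to $\sigma$.

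Next, I would define a linear map $\Phi : \Gr_0 \to A$ by $\Phi(e_\sigma) = Y_\sigma$ and extend linearly across the decomposition $\Gr_0 = \bigoplus_n \cP_n$. Since $\{Y_\sigma \mid \sigma \in \Pi_0\}$ is an orthonormal basis of $L^2(\cS_0,\tr_0)$, and after appropriate normalization the $e_\sigma$ form an orthogonal basis of $L^2(\Gr_0,\tr_0)$ with matching norms, the map $\Phi$ is an isometric bijection of pre-Hilbert spaces. The involution $e_\sigma^* = e_{\sigma\op}$ matches $Y_\sigma^* = Y_{\sigma\op}$, so $\Phi$ is a $*$-map.

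The main work is verifying $\Phi$ is multiplicative: the Bacher-Walker star product $e_\sigma \star e_\tau$ expands as a sum over $j = 0,\dots,\min(|\sigma|,|\tau|)$, where $j$ strings at the top of $e_\sigma$ are capped with the bottom $j$ strings of $e_\tau$. In terms of loops, this corresponds to factoring $\sigma = \sigma_1 \sigma_2$ and $\tau = \tau_1 \tau_2$ with $|\sigma_2| = |\tau_1| = j$ and $\tau_1 = \sigma_2\op$, and forming the concatenated loop $\sigma_1 \tau_2$. On the free graph algebra side, multiplying out $Y_\sigma \cdot Y_\tau = \sum a_\rho a_{\rho'}^{-1} a_{\rho''} a_{\rho'''}^{-1} \ell(\rho)\ell(\rho'\op)^*\ell(\rho'')\ell(\rho'''\op)^*$ and using the contraction $\ell(\e\op)^* \ell(\e') = \delta_{\e,\e'} p_{t(\e)}$ (together with the $p_v$-relations inherent to $C_0(V)$) collapses the product to exactly the same sum, with the surviving weight factors $a_\bullet$ reorganizing into the Wick-word normalization of the concatenated loops. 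The Frobenius-Perron condition on $\mu$ ensures the normalization constants on both sides match.

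The expected main obstacle is this combinatorial bookkeeping: aligning the Temperley-Lieb cap diagrams defining $\star$ with the Toeplitz contractions $\ell(\e\op)^*\ell(\e')$, and keeping the quantum-dimension normalizations $a_\sigma$ consistent with the inner products on the loop basis of $\cP_n$. Once multiplicativity is established, $\Phi$ is a trace-preserving $*$-isomorphism $\Gr_0 \to A$. Since both $\Gr_0 \subset \cA_0$ and $A \subset \cS_0(\Gamma,\mu)$ are dense with faithful tracial states (by simplicity and unique trace, using \eqref{eq:SimplicityCondition}, which is satisfied by the principal graph under the Frobenius-Perron weighting), the GNS construction produces unitarily equivalent representations, and $\Phi$ extends uniquely to a $*$-isomorphism $\cA_0 \cong \cS_0(\Gamma,\mu)$.
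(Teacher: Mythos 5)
First, a point of order: the paper does not actually prove this lemma --- it is imported verbatim as \cite[Cor.~3.4]{MR3266249}, so there is no internal argument to compare yours against. Judged on its own terms, your outline follows what is essentially the route of the cited source: orthogonalize $\cP_n$ into a loop-indexed basis, match it against the Wick words $Y_\sigma$, check compatibility of trace and $*$-structure, and pass to completions using faithfulness of the traces on the GNS representations. The overall strategy is the right one.

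The genuine gap sits exactly where you flag an ``obstacle'': multiplicativity of $\Phi$ \emph{is} the theorem, and ``the weight factors reorganize into the Wick-word normalization'' is an assertion rather than an argument. The capping formula for $\star$ describes the product in the original diagrammatic picture of $\cP_m\otimes\cP_n$, not in the orthogonalized basis $e_\sigma$; re-expanding a capped diagram in the $e_\sigma$'s introduces quantum-dimension ratios, while on the other side $Y_\sigma Y_\tau$ expands recursively as in Proposition~\ref{prop:changeofbasis} with coefficients built from the $a_\rho$'s. Verifying that these two systems of structure constants coincide (equivalently, that the joint $*$-distributions of the generating families agree) is the entire technical content of \cite[Cor.~3.4]{MR3266249}, and your sketch defers it. Two smaller corrections: the contraction identity should read $\ell(\e\op)^*\ell(\e') = \delta_{\e\op,\e'}\,p_{s(\e)}$ rather than $\delta_{\e,\e'}p_{t(\e)}$; and the identification sends $\cP_n$ to loops of length $2n$ (pairs of length-$n$ paths with common endpoint), so the two filtrations differ in degree by a factor of $2$ --- harmless for the $\rm C^*$-isomorphism, but worth making explicit since the surrounding sections compare the associated number operators and Lip-norms.
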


\begin{rem}
The examples of local uniform graph congergence in Examples \ref{exs:GraphConvergence} are all examples of principal graphs of subfactors with weightings given by quantum dimension functions which satify the Frobenius-Perron condition.
We may thus interpret Theorem \ref{thm:Convergence} as giving quantum Gromov-Hausdorff convergence of the compact quantum metric spaces associated to GJS $\rm C^*$-algebras.
\end{rem}

\subsection{The number operator}

As in \S\ref{sec:OR}, we have the number operator $N= \sum_{n\geq 0} nP_n$ on $\Gr_0$, where $P_n$ is the projection with range $B_n = A_n \ominus A_{n-1} = \cP_n$.
We end our article with some further observations about the properties of the number operator in our setup.

To begin, we give a supplementary diagrammatic proof that the number operator has bounded commutator with $\Gr_0$, although it follows directly from Lemma \ref{lem:BoundedCommutator} (\cite[Lemma 1.1]{MR2164594}).

\begin{lem}
\label{lem:DiagrammaticBoundedCommutator}
The number operator $N$ has bounded commutator with every $x\in \Gr_0$.
\end{lem}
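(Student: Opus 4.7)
My plan is to reduce via linearity to a homogeneous element $x \in \cP_m$, decompose the commutator into finitely many summands indexed by the number of capped strands, and then invoke an $L^2$-boundedness estimate for each summand.

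More precisely, for $x \in \cP_m$ and $y \in \cP_n$, I would expand the Bacher--Walker product as
$$
x \star y = \sum_{j=0}^{\min\{m,n\}} (x \star_j y),
\qquad x \star_j y \in \cP_{m+n-2j},
$$
where $x \star_j y$ denotes the single planar diagram in which $j$ right strands of $x$ cap off with $j$ left strands of $y$. Since each summand is an eigenvector of $N$ with eigenvalue $m+n-2j$, a direct computation gives
$$
[N,x](y) = N(x \star y) - n(x \star y) = \sum_{j=0}^{\min\{m,n\}} (m-2j)(x \star_j y).
$$
This is the first key step: the commutator is exhibited explicitly as a linear combination of at most $m+1$ partial-multiplication operators $L_x^{(j)}(y) := x \star_j y$, with scalar coefficients bounded in absolute value by $m$.

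The main step is then to show that each $L_x^{(j)}$ extends to a bounded operator on $L^2(\Gr_0, \tr_0)$, uniformly in $n$. I would establish this diagrammatically in the spirit of \cite{MR2732052, MR2645882}: realize $L^2(\Gr_0, \tr_0)$ as a planar-algebra-valued Fock space and factor $L_x^{(j)}$ as a creation tangle (inserting $x$) followed by $j$ cap tangles, each of which is a bounded planar-algebra operation whose norm is controlled by $\|x\|$ together with a combinatorial constant depending only on $m$ and $j$. Uniformity of this norm control over $n$ is the main obstacle, but it is exactly the content of the Haagerup-type inequality in Theorem \ref{thm:OzawaRieffel}, which in our setting is furnished (with $C = 1$) by Lemma \ref{lem:OzawaRieffelConstant}; the diagrammatic proof effectively reorganizes that inequality one planar summand at a time.

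Combining, we obtain
$$
\|[N,x]\| \leq m \sum_{j=0}^{m} \|L_x^{(j)}\| < \infty,
$$
which yields the desired bound for any homogeneous $x$, and hence by linearity for all $x \in \Gr_0$.
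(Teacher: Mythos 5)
Your proposal is correct and takes essentially the same route as the paper: both reduce to homogeneous $x\in\cP_m$, compute $[N,x]\widehat{y}=\sum_{j}(m-2j)\,(x\star_j y)$, and conclude by observing that each partial multiplication $y\mapsto x\star_j y$ is a single bounded diagrammatic operator independent of $n$ (the paper phrases this as membership in the Toeplitz algebra $\cT_0(\cP_\bullet)$, which is exactly your creation-plus-caps factorization). Your additional appeal to the Haagerup-type inequality of Theorem \ref{thm:OzawaRieffel} is a valid alternative justification of that boundedness but is not needed, since the elementary Fock-space bound already suffices.
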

\begin{proof}
To show $\| [N, x]\|_\I$ is bounded for an arbitrary $x\in \Gr_0$, it suffices to consider a fixed $x\in \cP_m$.
Suppose $y\in \cP_n$, and we write $\widehat{y}$ for it image in $L^2(\Gr_0, \tr_0)$. 
We need only treat the case $m<n$, since we may ignore the behavior of $[N, x]$ on a finite dimensional subspace.
We have
\begin{align*}
[N, x]\widehat{y} 
&= 
N(x\star y)^{\widehat{\,\,}} -x\star(N y)^{\widehat{\,\,}}
\\&= 
\sum_{j=0}^{m}
(m+n - 2j)
\begin{tikzpicture}[baseline=.2cm]
    \draw (-.15,0) -- (-.15,.7);
    \draw (.95,0) -- (.95,.7);
    \draw (.15,.3) arc (180:0:.25cm);
    \roundNbox{unshaded}{(0,0)}{.3}{0}{0}{$x$}
    \roundNbox{unshaded}{(.8,0)}{.3}{0}{0}{$y$}
    \node at (-.6,.5) {\scriptsize{$m-j$}};
    \node at (1.35,.5) {\scriptsize{$n-j$}};
    \node at (.4,.75) {\scriptsize{$j$}};
\end{tikzpicture}
-
\sum_{j=0}^{m}
n
\begin{tikzpicture}[baseline=.2cm]
    \draw (-.15,0) -- (-.15,.7);
    \draw (.95,0) -- (.95,.7);
    \draw (.15,.3) arc (180:0:.25cm);
    \roundNbox{unshaded}{(0,0)}{.3}{0}{0}{$x$}
    \roundNbox{unshaded}{(.8,0)}{.3}{0}{0}{$y$}
    \node at (-.6,.5) {\scriptsize{$m-j$}};
    \node at (1.35,.5) {\scriptsize{$n-j$}};
    \node at (.4,.75) {\scriptsize{$j$}};
\end{tikzpicture}
\\&=
\sum_{j=0}^{m}
(m- 2j)
\begin{tikzpicture}[baseline=.2cm]
    \draw (-.15,0) -- (-.15,.7);
    \draw (.95,0) -- (.95,.7);
    \draw (.15,.3) arc (180:0:.25cm);
    \roundNbox{unshaded}{(0,0)}{.3}{0}{0}{$x$}
    \roundNbox{unshaded}{(.8,0)}{.3}{0}{0}{$y$}
    \node at (-.6,.5) {\scriptsize{$m-j$}};
    \node at (1.35,.5) {\scriptsize{$n-j$}};
    \node at (.4,.75) {\scriptsize{$j$}};
\end{tikzpicture}
.
\end{align*}
We now see we can write this sum at the end as
$$
\left(
\sum_{j=0}^m 
(m-2j)
\begin{tikzpicture}[baseline = .2cm]
	\draw (-.2,.3)--(-.2,.7);	
	\draw (.2,.3) -- (.2,.7);	
	\roundNbox{unshaded}{(0,0)}{.3}{0}{0}{$x$}
	\node at (-.65,.5) {{\scriptsize{$m-j$}}};
	\node at (.4,.5) {{\scriptsize{$j$}}};
	\filldraw[red] (0,.3) circle (.05cm);
\end{tikzpicture}
\right)
\widehat{y}
$$
where the sum in parentheses is a finite sum of bounded operators in the $\cP_\bullet$-Toeplitz algebra $\cT_0(\cP_\bullet)$ \cite{MR3624399}, which is independent of $y$.
We are finished.
\end{proof}

\begin{prop}
The number operator $N$ has compact resolvent and is $\theta$-summable.
\end{prop}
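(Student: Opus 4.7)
The plan is to exploit the fact that $N$ is diagonalized by the orthogonal decomposition $L^2(\Gr_0,\tr_0)=\bigoplus_{n\geq 0}\cP_n$, with $N$ acting as the scalar $n$ on $\cP_n$. Crucially, each $\cP_n$ is finite dimensional (this is part of the standing assumption for a factor planar algebra), so each spectral projection $P_n$ has finite rank.

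First, for the compact resolvent claim, I would observe that $N$ is a non-negative self-adjoint operator with discrete spectrum $\{0,1,2,\dots\}$ and pure point multiplicity $\dim\cP_n$ at the eigenvalue $n$. Writing
\[
(I+N)^{-1}=\sum_{n\geq 0}\frac{1}{n+1}P_n,
\]
the partial sums $\sum_{n=0}^{K}\tfrac{1}{n+1}P_n$ are finite-rank, and the tail $\sum_{n>K}\tfrac{1}{n+1}P_n$ has operator norm at most $\tfrac{1}{K+2}\to 0$. Hence $(I+N)^{-1}$ is a norm-limit of finite-rank operators, i.e.\ compact. Since $\bbC\setminus\sigma(N)$ is connected and $(I+N)^{-1}$ is compact for one $\lambda=-1$ in the resolvent set, compactness of the resolvent on the whole resolvent set follows by the standard resolvent identity argument.

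Second, for $\theta$-summability, I would compute
\[
\Tr(e^{-tN^2})=\sum_{n\geq 0}\dim(\cP_n)\,e^{-tn^2}
\]
and show this is finite for every $t>0$. The main input is an exponential bound $\dim(\cP_n)\leq C^{n}$ for some $C=C(\cP_\bullet)>0$. Such a bound is standard: for a (sub)factor planar algebra with loop parameter $\delta$, one has $\dim(\cP_n)\leq \delta^{2n}$ (equivalently, $\dim(\cP_n)$ is bounded by the number of loops of length $2n$ based at $\star$ in the principal graph, which grows at most like the $2n$-th power of the Frobenius–Perron eigenvalue $\delta$). Given this,
\[
\sum_{n\geq 0}\dim(\cP_n)\,e^{-tn^2}\leq \sum_{n\geq 0}e^{(\log C)\,n-tn^2}<\infty
\]
for every $t>0$, since the quadratic term in the exponent dominates the linear term for large $n$.

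The only step that requires any care is the exponential bound on $\dim(\cP_n)$, but this is a standard feature of (sub)factor planar algebras and can be read off from the description of $\cP_n$ as a subspace of the path algebra on the principal graph. Everything else is a direct consequence of the finite-dimensionality of each eigenspace and the quadratic growth of the eigenvalues of $N^2$.
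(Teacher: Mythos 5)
Your proposal is correct and follows essentially the same route as the paper: the key estimate $\dim(\cP_n)\leq\delta^{2n}$ is obtained in the paper exactly as you indicate, by identifying $\dim(\cP_n)$ with the number of loops of length $2n$ at $\star$ on the principal graph and bounding $\langle A_\Gamma^{2n}e_\star,e_\star\rangle$ by $\|A_\Gamma\|^{2n}\leq\delta^{2n}$, after which $\sum_n\delta^{2n}e^{-tn^2}<\infty$ gives $\theta$-summability. Your explicit verification of compact resolvent via norm-approximation of $(I+N)^{-1}$ by finite-rank truncations is a routine addition that the paper leaves implicit.
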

\begin{proof}
We must show that $e^{-t N^2}$ is trace class for all $t>0$.
Since $\Gamma$ is the principal graph of $\cP_\bullet$, $\dim(\cP_n)$ is the number of loops of length $2n$ on $\Gamma$ based at $*$.
Letting $A_\Gamma$ be the adjacency matrix of $\Gamma$, we have that $A_\Gamma$ acts on $\ell^2(V)$, and $\|A_\Gamma\| \leq \delta$ by \cite[\S1.3.5]{MR1278111}.
Define $e_*\in \ell^2(V)$ by $e_*(v) = \delta_{v=*}$, and note that the number of loops based at $*$ of length $2n$ is $\langle A_\Gamma^{2n} e_*, e_*\rangle$.
Hence, we see
$$
\dim(\cP_n) 
= 
\langle A_\Gamma^{2n} e_*, e_*\rangle 
= 
|\langle A_\Gamma e_*, e_*\rangle | 
\leq 
\|A_\Gamma\|^{2n} 
\leq 
\delta^{2n}.
$$
Thus, on $\cP_{n}$, $e^{-t N^2}$ has trace bounded above by 
$$
\dim(\cP_n) e^{-t n^2} \leq \delta^{2n} e^{-t n^2}.
$$
It is now clear by the root test that
\begin{align*}
\sum_{n\geq 0} \frac{\delta^{2n}}{e^{t n^2}} &<\infty. 
\qedhere
\end{align*}
\end{proof}

We now use techniques from \cite{MR3118393} to show that the number operator arises as $\partial^*\partial$ where $\partial$ is a derivation fom $\Gr_0$ into a Hilbert space.

\begin{defn}
We define $\cK_0 = \bigoplus_{i,j\geq 0} \cP_{i,j,1}$, where $\cP_{i,j,1}=\cP_{i+j+1}$, but we think of elements as having $i$ strings out the top, $j$ strings out the bottom, and one string to the right:
$$
\cP_{i,j,1}\ni x
\longleftrightarrow
\begin{tikzpicture}[baseline=-.1cm]
    \draw (0,-.7) -- (0,.7);
    \draw (0,0) -- (.7,0);
    \roundNbox{unshaded}{(0,0)}{.3}{0}{0}{$x$}
    \node at (.2,.5) {\scriptsize{$i$}};
    \node at (.2,-.5) {\scriptsize{$j$}};
\end{tikzpicture}
\,.
$$
We define $\cK$ to be the completion of $\cK_0$ using the inner product 
$$
\left\langle 
\begin{tikzpicture}[baseline=-.1cm]
    \draw (0,-.7) -- (0,.7);
    \draw (0,0) -- (.7,0);
    \roundNbox{unshaded}{(0,0)}{.3}{0}{0}{$x$}
    \node at (.2,.5) {\scriptsize{$i$}};
    \node at (.2,-.5) {\scriptsize{$j$}};
\end{tikzpicture}
\,,\,
\begin{tikzpicture}[baseline=-.1cm]
    \draw (0,-.7) -- (0,.7);
    \draw (0,0) -- (.7,0);
    \roundNbox{unshaded}{(0,0)}{.3}{0}{0}{$y$}
    \node at (.2,.5) {\scriptsize{$k$}};
    \node at (.2,-.5) {\scriptsize{$\ell$}};
\end{tikzpicture}
\right\rangle
=
\delta_{i=k}\delta_{j=\ell}\,
\begin{tikzpicture}[baseline=-.1cm]
    \draw (0,.3) arc (180:0:.4cm);
    \draw (0,-.3) arc (-180:0:.4cm);
    \draw (0,0) -- (.8,0);
    \roundNbox{unshaded}{(0,0)}{.3}{0}{0}{$x$}
    \roundNbox{unshaded}{(.8,0)}{.3}{0}{0}{$y^*$}
    \node at (-.2,.5) {\scriptsize{$i$}};
    \node at (-.2,-.5) {\scriptsize{$j$}};
    \node at (-.5,0) {$\star$};
    \node at (1.3,0) {$\star$};
\end{tikzpicture}
\,.
$$
We have an action of $\Gr_0\otimes \Gr_0^{\text{op}}$ on $\cK$ by bounded operators.
Given $x\in \cP_m$ and $y\in \cP_n$, we think of $x\otimes y^{\text{op}}$ as the following diagram:
$$
x\otimes y^{\text{op}}
=
\begin{tikzpicture}[baseline=-.1cm]
    \draw (0,.4) -- (0,1.1);
    \draw (0,-.4) -- (0,-1.1);
    \roundNbox{unshaded}{(0,.4)}{.3}{0}{0}{$x$}
    \roundNbox{unshaded}{(0,-.4)}{.3}{0}{0}{\rotatebox{180}{$y$}}
    \node at (-.2,.9) {\scriptsize{$m$}};
    \node at (-.2,-.9) {\scriptsize{$n$}};
\end{tikzpicture}
$$
which acts on $\cK$ by left multiplication in a variation of the Bacher-Walker product.
If $z\in \cP_{k,\ell,1}$, we have
$$
(x\otimes y^{\text{op}}) \cdot z 
=
\sum_{i=0}^{\min\{m,k\}}
\sum_{j=0}^{\min\{n,\ell\}}
\,
\begin{tikzpicture}[baseline=-.1cm]
    \draw (-.15,.4) -- (-.15,1.1);
    \draw (-.15,-.4) -- (-.15,-1.1);
    \draw (.95,1.1) -- (.95,-1.1);
    \draw (.8,0) -- (1.5,0);
    \draw (.15,.7) arc (180:0:.25cm) -- (.65,0);
    \draw (.15,-.7) arc (-180:0:.25cm) -- (.65,0);
    \roundNbox{unshaded}{(0,.4)}{.3}{0}{0}{$x$}
    \roundNbox{unshaded}{(0,-.4)}{.3}{0}{0}{\rotatebox{180}{$y$}}
    \roundNbox{unshaded}{(.8,0)}{.3}{0}{0}{$z$}
    \node at (-.55,.9) {\scriptsize{$m-i$}};
    \node at (-.55,-.9) {\scriptsize{$n-j$}};
    \node at (1.3,.9) {\scriptsize{$k-i$}};
    \node at (1.3,-.9) {\scriptsize{$\ell-j$}};
    \node at (.4,1.1) {\scriptsize{$i$}};
    \node at (.4,-1.1) {\scriptsize{$j$}};
\end{tikzpicture}
\,.
$$
It is easy to see that this action is bounded using the Fock space argument of \cite{MR3624399}.
This means that $\cK$ has the natural structure of a $\Gr_0-\Gr_0$ bimodule.
We use the notation $x\star \xi \star y = (x\otimes y^{\text{op}}) \cdot \xi$.
\end{defn}

\begin{defn}
We define a map $\partial : \Gr_0 \to \cK$ by the linear extension of
$$
\cP_m
\ni
\begin{tikzpicture}[baseline=-.1cm]
    \draw (0,0) -- (0,.7);
    \roundNbox{unshaded}{(0,0)}{.3}{0}{0}{$x$}
    \node at (.2,.5) {\scriptsize{$m$}};
\end{tikzpicture}
\longmapsto
\sum_{j=0}^{m-1}
\begin{tikzpicture}[baseline=-.1cm]
    \draw (0,-.7) -- (0,.7);
    \draw (0,0) -- (.7,0);
    \roundNbox{unshaded}{(0,0)}{.3}{0}{0}{$x$}
    \node at (.2,.5) {\scriptsize{$j$}};
    \node at (.7,-.5) {\scriptsize{$m-j-1$}};
\end{tikzpicture}
=
\sum_{j=0}^{m-1}
\begin{tikzpicture}[baseline=-.1cm]
    \draw (0,-.7) -- (0,.7);
    \draw (0,0) -- (.7,0);
    \roundNbox{unshaded}{(0,0)}{.3}{0}{0}{$x$}
    \node at (.2,-.5) {\scriptsize{$j$}};
    \node at (.7,.5) {\scriptsize{$m-j-1$}};
\end{tikzpicture}
\,.
$$
\end{defn}

\begin{lem}
The map $\partial$ is a closable derivation in the Bacher-Walker product.
\end{lem}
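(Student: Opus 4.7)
The plan is to dispose of the two assertions separately: first verify the Leibniz rule directly from the diagrammatic definitions, then establish closability by exhibiting a formal adjoint densely defined on $\cK_0$.

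For the Leibniz rule, I would fix homogeneous elements $x\in \cP_m$ and $y\in \cP_n$ and expand $\partial(x\star y)$ as a double sum: first over the Bacher-Walker terms indexed by $k\in\{0,\ldots,\min(m,n)\}$ (with $k$ contracted strings producing an element of $\cP_{m+n-2k}$), and then, for each such term, over the $m+n-2k-1$ positions on the perimeter where the distinguished right-pointing string is inserted. The decisive observation is that each such perimeter position lies on a strand originating either from $x$ or from $y$. Collecting the terms whose marked string lies on an $x$-strand yields precisely $\partial(x)\star y$ computed in the $\Gr_0$-$\Gr_0$ bimodule action on $\cK$, while the remaining terms assemble into $x\star \partial(y)$. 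This gives $\partial(x\star y)=\partial(x)\star y+x\star \partial(y)$ after checking that the summation ranges on the two sides match, including the boundary cases where $j=0$ or $j=m-1$ and strings are capped off against $y$.

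For closability, I would produce an explicit formal adjoint $\partial^*$ on the dense subspace $\cK_0\subset \cK$. For $\xi\in\cP_{i,j,1}$, I define $\partial^*(\xi)\in\cP_{i+j+1}$ by the diagrammatic operation that closes the distinguished right-pointing string against an appropriately placed cap, rotating the perimeter so the result is a standard element of $\cP_{i+j+1}$. One then checks the identity
\[
\langle \partial(x),\xi\rangle_{\cK}=\langle x,\partial^*(\xi)\rangle_{L^2(\Gr_0,\tr_0)}
\]
for $x\in\cP_m$ and $\xi\in\cP_{i,j,1}$ by unwinding both inner products into the same closed planar tangle built from $x$ and $\xi^*$; the orthogonality relations supplied by the inner product on $\cK$ enforce the constraints $i+j+1=m$ automatically. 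Since $\partial^*$ is well-defined on a dense subspace, standard functional analysis (for example \cite[\S VIII.1]{MR1278111}-type arguments) shows $\partial$ is closable.

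The main obstacle is the bookkeeping in the Leibniz step: one must track, for every Bacher-Walker summand of $x\star y$, how the $m+n-2k-1$ boundary positions split between strings inherited from $x$ and those inherited from $y$, and verify that the induced sums coincide with what the bimodule action $x\star(\cdot)\star y$ on $\cK$ produces. A secondary subtlety is ensuring that no stray loop-values $\delta$ appear in either side: careful matching of the caps in the bimodule action against those already present in $\partial(x\star y)$ should show they cancel exactly.
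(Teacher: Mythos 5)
Your proposal matches the paper's argument: the Leibniz rule is verified by expanding $\partial(x\star y)$ diagrammatically and sorting the summands according to whether the distinguished right-pointing string emanates from a strand of $x$ or of $y$ (the two groups reassembling into $\partial(x)\star y$ and $x\star\partial(y)$ under the bimodule action on $\cK$), and closability follows by exhibiting the densely defined formal adjoint $\partial^*$ on $\cK_0$, which simply closes off the marked string to return an element of $\cP_{i+j+1}$. The only slip is an off-by-one in your bookkeeping — each Bacher--Walker summand lying in $\cP_{m+n-2k}$ contributes $m+n-2k$ marked-string positions, not $m+n-2k-1$ — but this does not affect the structure or validity of the argument.
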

\begin{proof}
First, we show that $\partial$ is a derivation.
We need to show that $\partial (x\star y)= x\star \partial (y) + \partial(x) \star y$.
It is straightforward to compute
\begin{align*}
\partial\left(
\sum_{i=0}^{\min\{m,n\}}
\,
\begin{tikzpicture}[baseline=.2cm]
    \draw (-.15,0) -- (-.15,.7);
    \draw (.95,0) -- (.95,.7);
    \draw (.15,.3) arc (180:0:.25cm);
    \roundNbox{unshaded}{(0,0)}{.3}{0}{0}{$x$}
    \roundNbox{unshaded}{(.8,0)}{.3}{0}{0}{$y$}
    \node at (-.55,.5) {\scriptsize{$m-i$}};
    \node at (1.3,.5) {\scriptsize{$n-i$}};
    \node at (.4,.7) {\scriptsize{$i$}};
\end{tikzpicture}
\,\right)
&=
\sum_{i=0}^{\min\{m,n\}}
\sum_{j=0}^{n-i}
\begin{tikzpicture}[baseline=-.1cm]
    \draw (-.15,.4) -- (-.15,1.1);
    \draw (.95,1.1) -- (.95,-1.1);
    \draw (.8,0) -- (1.5,0);
    \draw (.15,.7) arc (180:0:.25cm) -- (.65,0);
    \roundNbox{unshaded}{(0,.4)}{.3}{0}{0}{$x$}
    \roundNbox{unshaded}{(.8,0)}{.3}{0}{0}{$y$}
    \node at (-.55,.9) {\scriptsize{$m-i$}};
    \node at (1.1,.9) {\scriptsize{$j$}};
    \node at (0,-.9) {\scriptsize{$n-j-i-1$}};
    \node at (.4,1.1) {\scriptsize{$i$}};
\end{tikzpicture}
+
\sum_{i=0}^{\min\{m,n\}}
\sum_{j=0}^{m-j}
\begin{tikzpicture}[baseline=-.1cm, yscale=-1]
    \draw (-.15,.4) -- (-.15,1.1);
    \draw (.95,1.1) -- (.95,-1.1);
    \draw (.8,0) -- (1.5,0);
    \draw (.15,.7) arc (180:0:.25cm) -- (.65,0);
    \roundNbox{unshaded}{(0,.4)}{.3}{0}{0}{\rotatebox{180}{$y$}}
    \roundNbox{unshaded}{(.8,0)}{.3}{0}{0}{$x$}
    \node at (-.55,.9) {\scriptsize{$n-i$}};
    \node at (1.1,.9) {\scriptsize{$j$}};
    \node at (0,-.9) {\scriptsize{$m-j-i-1$}};
    \node at (.4,1.1) {\scriptsize{$i$}};
\end{tikzpicture}
\end{align*}
The right hand side is easily seen to be equal to 
$x\star \partial(y) + \partial(x)\star y$ 
after switching the order of summation.
To show $\partial$ is closable, it is easy to calculate that in the Bacher-Walker product,
$$
\partial^*\left(
\begin{tikzpicture}[baseline=-.1cm]
    \draw (0,-.7) -- (0,.7);
    \draw (0,0) -- (.7,0);
    \roundNbox{unshaded}{(0,0)}{.3}{0}{0}{$x$}
    \node at (.2,.5) {\scriptsize{$j$}};
    \node at (.7,-.5) {\scriptsize{$m-j-1$}};
\end{tikzpicture}
\right)
=
\begin{tikzpicture}[baseline=-.1cm]
    \draw (0,0) -- (0,.7);
    \roundNbox{unshaded}{(0,0)}{.3}{0}{0}{$x$}
    \node at (.2,.5) {\scriptsize{$m$}};
\end{tikzpicture}\,.
$$
Hence $\partial^*$ defined, and we are finished.
\end{proof}

The following corollary is now immediate.

\begin{cor}
The number operator $N=\partial^*\partial$.
\end{cor}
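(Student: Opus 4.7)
The plan is to verify the identity on homogeneous elements $x\in \cP_m$ and extend by linearity, since both $N$ and $\partial^*\partial$ are diagonal with respect to the decomposition $\Gr_0=\bigoplus_{m\geq 0}\cP_m$.

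Fix $x\in\cP_m$. First I would apply the formula for $\partial$ given in its definition to obtain the sum
$$
\partial(x)=\sum_{j=0}^{m-1}
\begin{tikzpicture}[baseline=-.1cm]
    \draw (0,-.7) -- (0,.7);
    \draw (0,0) -- (.7,0);
    \roundNbox{unshaded}{(0,0)}{.3}{0}{0}{$x$}
    \node at (.2,.5) {\scriptsize{$j$}};
    \node at (.7,-.5) {\scriptsize{$m-j-1$}};
\end{tikzpicture}
\in\bigoplus_{j=0}^{m-1}\cP_{j,\,m-j-1,\,1}\subset \cK.
$$
Next I would apply $\partial^*$ term-by-term using the formula established in the preceding lemma, namely
$\partial^*$ of the $j$-th summand equals $x$ (viewed back in $\cP_m$), for each $j=0,\dots,m-1$. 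This is the key input and requires no further calculation beyond what has already been done.

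Summing these $m$ identical contributions gives $\partial^*\partial(x)=m\,x$. Since $N$ acts as multiplication by $m$ on $\cP_m$ by definition (with $\cP_m = A_m\ominus A_{m-1}$), we conclude $\partial^*\partial(x)=Nx$ for every homogeneous $x$, and linearity extends this equality to all of $\Gr_0$. The main (and essentially only) obstacle would have been the computation of $\partial^*$, but that was already carried out inside the preceding lemma, so the corollary really is an immediate bookkeeping consequence.
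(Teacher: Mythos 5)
Your proposal is correct and matches the paper's intent: the paper simply declares the corollary ``immediate'' from the lemma computing $\partial^*$ on each summand of $\partial(x)$, and the computation you spell out --- $\partial^*\partial(x)=\sum_{j=0}^{m-1}x=mx=Nx$ on each $\cP_m$, extended by linearity --- is exactly the omitted bookkeeping.
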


\bibliographystyle{alpha}
{\footnotesize{
\bibliography{bibliography}

\begin{thebibliography}{GdlHJ89}

\bibitem[{A}gu16]{1612.02404}
{K}. {A}guilar.
\newblock {A}{F} algebras in the quantum {G}romov-{H}ausdorff propinquity
  space, 2016.
\newblock \arxiv{1612.02404}.

\bibitem[Agu19]{MR4015960}
Konrad Aguilar.
\newblock Fell topologies for {AF}-algebras and the quantum propinquity.
\newblock {\em J. Operator Theory}, 82(2):469--514, 2019.
\newblock \mathscinet{MR4015960} \doi{10.7900/jot}.

\bibitem[BH96]{MR1386923}
Dietmar Bisch and Uffe Haagerup.
\newblock Composition of subfactors: new examples of infinite depth subfactors.
\newblock {\em Ann. Sci. \'Ecole Norm. Sup. (4)}, 29(3):329--383, 1996.
\newblock \mathscinet{MR1386923}.

\bibitem[BHP12]{MR3405915}
Arnaud Brothier, Michael Hartglass, and David Penneys.
\newblock Rigid {$C\sp *$}-tensor categories of bimodules over interpolated
  free group factors.
\newblock {\em J. Math. Phys.}, 53(12):123525, 43, 2012.
\newblock \mathscinet{MR3405915} \doi{10.1063/1.4769178} \arxiv{1208.5505}.

\bibitem[BNP07]{MR2314611}
Dietmar Bisch, Remus Nicoara, and Sorin Popa.
\newblock Continuous families of hyperfinite subfactors with the same standard
  invariant.
\newblock {\em Internat. J. Math.}, 18(3):255--267, 2007.
\newblock \mathscinet{MR2314611} \arXiv{math.OA/0604460}
  \doi{10.1142/S0129167X07004011}.

\bibitem[BS01]{MR1873300}
Itai Benjamini and Oded Schramm.
\newblock Recurrence of distributional limits of finite planar graphs.
\newblock {\em Electron. J. Probab.}, 6:no. 23, 13, 2001.
\newblock \mathscinet{MR1873300} \doi{10.1214/EJP.v6-96} \arxiv{math/0011019}.

\bibitem[CJS14]{MR3118393}
Stephen Curran, Vaughan F.~R. Jones, and Dimitri Shlyakhtenko.
\newblock On the symmetric enveloping algebra of planar algebra subfactors.
\newblock {\em Trans. Amer. Math. Soc.}, 366(1):113--133, 2014.
\newblock \mathscinet{MR3118393} \doi{10.1090/S0002-9947-2013-05910-7}
  \arxiv{1105.1721}.

\bibitem[Con89]{MR1007407}
A.~Connes.
\newblock Compact metric spaces, {F}redholm modules, and hyperfiniteness.
\newblock {\em Ergodic Theory Dynam. Systems}, 9(2):207--220, 1989.
\newblock \mathscinet{MR1007407} \doi{10.1017/S0143385700004934}.

\bibitem[Con94]{MR1303779}
Alain Connes.
\newblock {\em Noncommutative geometry}.
\newblock Academic Press Inc., San Diego, CA, 1994.
\newblock \mathscinet{MR1303779}.

\bibitem[FR99]{MR1722197}
Neal~J. Fowler and Iain Raeburn.
\newblock The {T}oeplitz algebra of a {H}ilbert bimodule.
\newblock {\em Indiana Univ. Math. J.}, 48(1):155--181, 1999.
\newblock \mathscinet{MR1722197}, \doi{10.1512/iumj.1999.48.1639},
  \arxiv{math/9806093}.

\bibitem[GdlHJ89]{MR999799}
Frederick~M. Goodman, Pierre de~la Harpe, and Vaughan~F.R. Jones.
\newblock {\em Coxeter graphs and towers of algebras}.
\newblock Mathematical Sciences Research Institute Publications, 14.
  Springer-Verlag, New York, 1989.
\newblock x+288 pp. ISBN: 0-387-96979-9, \mathscinet{MR999799}.

\bibitem[GJS10]{MR2732052}
Alice Guionnet, Vaughan F.~R. Jones, and Dimitri Shlyakhtenko.
\newblock Random matrices, free probability, planar algebras and subfactors.
\newblock In {\em Quanta of maths}, volume~11 of {\em Clay Math. Proc.}, pages
  201--239. Amer. Math. Soc., Providence, RI, 2010.
\newblock \mathscinet{MR2732052}, \arXiv{0712.2904v2}.

\bibitem[GJS11]{MR2807103}
Alice Guionnet, Vaughan F.~R. Jones, and Dimitri Shlyakhtenko.
\newblock A semi-finite algebra associated to a subfactor planar algebra.
\newblock {\em J. Funct. Anal.}, 261(5):1345--1360, 2011.
\newblock \arXiv{0911.4728}, \mathscinet{MR2807103},
  \doi{10.1016/j.jfa.2011.05.004}.

\bibitem[Har17]{MR3679687}
Michael Hartglass.
\newblock Free product {${\rm C}^*$}-algebras associated with graphs, free
  differentials, and laws of loops.
\newblock {\em Canad. J. Math.}, 69(3):548--578, 2017.
\newblock \mathscinet{MR3679687} \doi{10.4153/CJM-2016-022-6}
  \arxiv{1509.02553}.

\bibitem[HHP20]{MR4139893}
Michael Hartglass and Roberto Hern\'{a}ndez~Palomares.
\newblock Realizations of rigid {$\rm C^*$}-tensor categories as bimodules over
  {GJS} {$\rm C^*$}-algebras.
\newblock {\em J. Math. Phys.}, 61(8):081703, 32, 2020.
\newblock \mathscinet{MR4139893} \doi{10.1063/5.0015294} \arxiv{2005.09821}.

\bibitem[HP14]{MR3266249}
Michael Hartglass and David Penneys.
\newblock {$C\sp *$}-algebras from planar algebras {II}: {T}he
  {G}uionnet--{J}ones--{S}hlyakhtenko {$C\sp *$}-algebras.
\newblock {\em J. Funct. Anal.}, 267(10):3859--3893, 2014.
\newblock \mathscinet{MR3266249}, \doi{10.1016/j.jfa.2014.08.024},
  \arxiv{1401.2486}.

\bibitem[HP17]{MR3624399}
Michael Hartglass and David Penneys.
\newblock {$C^*$}-algebras from planar algebras {I}: {C}anonical
  {$C^*$}-algebras associated to a planar algebra.
\newblock {\em Trans. Amer. Math. Soc.}, 369(6):3977--4019, 2017.
\newblock \mathscinet{MR3624399} \doi{10.1090/tran/6781} \arXiv{1401.2485}.

\bibitem[Jon83]{MR0696688}
Vaughan F.~R. Jones.
\newblock Index for subfactors.
\newblock {\em Invent. Math.}, 72(1):1--25, 1983.
\newblock \mathscinet{MR696688}, \doi{10.1007/BF01389127}.

\bibitem[Jon99]{math.QA/9909027}
Vaughan F.~R. Jones.
\newblock Planar algebras {I}, 1999.
\newblock \arXiv{math.QA/9909027}.

\bibitem[Jon12]{MR2972458}
Vaughan F.~R. Jones.
\newblock Quadratic tangles in planar algebras.
\newblock {\em Duke Math. J.}, 161(12):2257--2295, 2012.
\newblock \mathscinet{MR2972458}, \arxiv{1007.1158},
  \doi{10.1215/00127094-1723608}.

\bibitem[JRZ18]{MR3778347}
Marius Junge, Sepideh Rezvani, and Qiang Zeng.
\newblock Harmonic analysis approach to {G}romov-{H}ausdorff convergence for
  noncommutative tori.
\newblock {\em Comm. Math. Phys.}, 358(3):919--994, 2018.
\newblock \mathscinet{MR3778347} \doi{10.1007/s00220-017-3017-4}
  \arxiv{1612.02735}.

\bibitem[JSW10]{MR2645882}
Vaughan Jones, Dimitri Shlyakhtenko, and Kevin Walker.
\newblock An orthogonal approach to the subfactor of a planar algebra.
\newblock {\em Pacific J. Math.}, 246(1):187--197, 2010.
\newblock \mathscinet{MR2645882}, \doi{10.2140/pjm.2010.246.187},
  \arxiv{0807.4146}.

\bibitem[KK21]{MR4266364}
Jens Kaad and David Kyed.
\newblock Dynamics of compact quantum metric spaces.
\newblock {\em Ergodic Theory Dynam. Systems}, 41(7):2069--2109, 2021.
\newblock \mathscinet{MR4266364} \doi{10.1017/etds.2020.34} \arxiv{1904.13278}.

\bibitem[KL09]{MR2520541}
David Kerr and Hanfeng Li.
\newblock On {G}romov-{H}ausdorff convergence for operator metric spaces.
\newblock {\em J. Operator Theory}, 62(1):83--109, 2009.
\newblock \mathscinet{MR2520541} \arxiv{math/0411157}.

\bibitem[Lat16]{MR3413867}
Fr\'{e}d\'{e}ric Latr\'{e}moli\`ere.
\newblock The quantum {G}romov-{H}ausdorff propinquity.
\newblock {\em Trans. Amer. Math. Soc.}, 368(1):365--411, 2016.

\bibitem[{L}at17]{MR3718439}
{F}. {L}atr{\'e}moli{\`e}re.
\newblock A compactness theorem for the dual {G}romov-{H}ausdorff propinquity.
\newblock {\em Indiana Univ. Math. J.}, 66(5):1707--1753, 2017.
\newblock \mathscinet{MR3718439} \doi{10.1512/iumj.2017.66.6151}
  \arxiv{1501.06121}.

\bibitem[Liu15]{MR3345186}
Zhengwei Liu.
\newblock Composed inclusions of {$A\sb 3$} and {$A\sb 4$} subfactors.
\newblock {\em Adv. Math.}, 279:307--371, 2015.
\newblock \mathscinet{MR3345186} \doi{10.1016/j.aim.2015.03.017}
  \arxiv{1308.5691}.

\bibitem[OR05]{MR2164594}
Narutaka Ozawa and Marc~A. Rieffel.
\newblock Hyperbolic group {$C^*$}-algebras and free-product {$C^*$}-algebras
  as compact quantum metric spaces.
\newblock {\em Canad. J. Math.}, 57(5):1056--1079, 2005.
\newblock \mathscinet{MR2164594} \doi{10.4153/CJM-2005-040-0}
  \arxiv{math/0302310}.

\bibitem[Pen20]{MR4133163}
David Penneys.
\newblock Unitary dual functors for unitary multitensor categories.
\newblock {\em High. Struct.}, 4(2):22--56, 2020.
\newblock \mathscinet{MR4133163} \arxiv{1808.00323}.

\bibitem[Pop94]{MR1278111}
Sorin Popa.
\newblock Classification of amenable subfactors of type {II}.
\newblock {\em Acta Math.}, 172(2):163--255, 1994.
\newblock \mathscinet{MR1278111}, \doi{10.1007/BF02392646}.

\bibitem[Pop95]{MR1334479}
Sorin Popa.
\newblock An axiomatization of the lattice of higher relative commutants of a
  subfactor.
\newblock {\em Invent. Math.}, 120(3):427--445, 1995.
\newblock \mathscinet{MR1334479} \doi{10.1007/BF01241137}.

\bibitem[Rie98]{MR1647515}
Marc~A. Rieffel.
\newblock Metrics on states from actions of compact groups.
\newblock {\em Doc. Math.}, 3:215--229, 1998.
\newblock \mathscinet{MR1647515} \arxiv{math/9807084}.

\bibitem[Rie99]{MR1727499}
Marc~A. Rieffel.
\newblock Metrics on state spaces.
\newblock {\em Doc. Math.}, 4:559--600, 1999.
\newblock \mathscinet{MR1727499}, \arxiv{math/9906151}.

\bibitem[Rie04a]{MR2055927}
Marc~A. Rieffel.
\newblock Gromov-{H}ausdorff distance for quantum metric spaces.
\newblock {\em Mem. Amer. Math. Soc.}, 168(796):1--65, 2004.
\newblock Appendix 1 by Hanfeng Li, Gromov-Hausdorff distance for quantum
  metric spaces. Matrix algebras converge to the sphere for quantum
  Gromov-Hausdorff distance. \mathscinet{MR2055927} \doi{10.1090/memo/0796}
  \arxiv{math/0011063}.

\bibitem[Rie04b]{MR2055928}
Marc~A. Rieffel.
\newblock Matrix algebras converge to the sphere for quantum
  {G}romov-{H}ausdorff distance.
\newblock volume 168, pages 67--91. 2004.
\newblock Gromov-Hausdorff distance for quantum metric spaces. Matrix algebras
  converge to the sphere for quantum Gromov-Hausdorff distance.
  \mathscinet{MR2055928} \doi{10.1090/memo/0796} \arxiv{math/0108005 }.

\bibitem[Wen88]{MR936086}
Hans Wenzl.
\newblock Hecke algebras of type {$A_n$} and subfactors.
\newblock {\em Invent. Math.}, 92(2):349--383, 1988.
\newblock \mathscinet{MR936086} \doi{10.1007/BF01404457}.

\bibitem[Wen90]{MR1090432}
Hans Wenzl.
\newblock Quantum groups and subfactors of type {$B$}, {$C$}, and {$D$}.
\newblock {\em Comm. Math. Phys.}, 133(2):383--432, 1990.
\newblock \mathscinet{MR1090432}.

\bibitem[Xu98]{MR1660937}
Feng Xu.
\newblock Standard {$\lambda$}-lattices from quantum groups.
\newblock {\em Invent. Math.}, 134(3):455--487, 1998.
\newblock \mathscinet{MR1660937} \doi{10.1007/s002220050271}.

\end{thebibliography}
}}
\end{document}